\documentclass{article}
\usepackage{latexsym}
\usepackage{mathrsfs}
\usepackage{indentfirst}
\usepackage{amsmath,amsthm, amssymb}
\usepackage[dvips]{graphicx}
\usepackage{graphicx}
\usepackage{epsfig}
\usepackage{verbatim}

%

\topmargin -15mm
\oddsidemargin -10mm
\evensidemargin -10mm

\textwidth 160mm

\textheight 250mm

%
%



%
\newtheorem{theorem}{Theorem}[section]
\newtheorem{lemma}[theorem]{Lemma}

\setlength{\parindent}{1em}

\begin{document}
\title{Bipartite graphs with a perfect matching and digraphs \thanks{Work supported by the
Scientific Research Foundation of Guangdong Industry Technical
College, granted No. 2005-11.}}
\author{Zan-Bo Zhang$^1$$^2$\thanks{Corresponding Author. Email address: eltonzhang2001@yahoo.com.cn.}, Dingjun
Lou$^2$
\\ $^1$Department of Computer Engineering,
\\ Guangdong Industry Technical College, Guangzhou 510300, China
\\ $^2$Department of Computer Science,
\\ Sun Yat-sen University, Guangzhou 510275, China
}
\date{}
\maketitle

\begin{abstract}
In this paper, we introduce a corresponding between bipartite
graphs with a perfect matching and digraphs, which implicates an
equivalent relation between the extendibility of bipartite graphs
and the strongly connectivity of digraphs. Such an equivalent
relation explains the similar results on $k$-extendable bipartite
graphs and $k$-strong digraphs. We also study the relation among
$k$-extendable bipartite graphs, $k$-strong digraphs and
combinatorial matrices. For bipartite graphs that are not
1-extendable and digraphs that are not strong, we prove that the
elementary components and strong components are counterparts.
$\newline$\noindent\textbf{Key words}: $k$-extendable, strongly
$k$-connected, indecomposable, irreducible, strong component,
elementary component
\end{abstract}
\section{Introduction and terminologies}

In this paper, all graphs (digraphs) considered have no loop and
multiple edge (arc) unless explicitly stated. For all
terminologies not defined, we refer the reader to \cite{BG1},
\cite{B1} and \cite{LP1}. All matrices considered are zero-one
matrices.

We use $V(G)$ and $E(G)$ to denote the vertex set and edge set of
a graph $G$. Let $G$ be a bipartite graph with bipartition $(U,W)$
where $U=\{u_1,\ldots,u_n\}$ and $W=\{w_1,\ldots,w_n\}$. The
matrix $A=(a_{ij})_{n\times n}$, where $a_{ij}=1$ if and only if
$u_i w_j\in E(G)$, is called the \emph{reduced adjacency matrix}
of $G$. We denote $A$ by $R(G)$. We call $G$ the \emph{reduced
associated bipartite graph} of $A$ and denote $G$ by $B(A)$.

A connected graph is \emph{elementary} if the union of its perfect
matchings forms a connected subgraph. A connected graph $G$ is
called \emph{$k$-extendable}, for $k\leq (|V(G)|-1)/2$, if $G$ has
a matching of size $k$, and every matching of size $k$ of $G$ is
contained in a perfect matching of $G$. $G$ is said to be
\emph{minimal $k$-extendable} if $G$ is $k$-extendable but $G-e$
is not $k$-extendable for any $e\in E(G)$. An edge of $G$ is
called a \emph{fixed single} (\emph{fixed double}) edge if it
belongs to no (all) perfect matchings of $G$. An edge of $G$ is
called \emph{fixed} if it is either a fixed single or a fixed
double edge of $G$. All non-fixed edges of $G$ form a subgraph
$H$, each component of which is elementary and is therefore called
an \emph{elementary component}.

Let $D$ be a digraph. We denote by $V(D)$, $A(D)$ and $M(D)$ the
vertex set, arc set and the adjacent matrix of $D$. Let $M$ be an
adjacent matrix of $D$, we call $D$ the \emph{associated digraph}
of $M$ and denote $D$ by $D(M)$. $D$ is \emph{strongly connected},
or \emph{strong}, if there exists a path from $x$ to $y$ and a
path from $y$ to $x$ in $D$ for any $x,y\in V(D)$, $x\neq y$. A
set $S \subset V(D)$ is a \emph{separator} if $D-S$ is not strong.
$D$ is \emph{k-strongly connected}, or \emph{k-strong}, if
$|V(D)|\geq k+1$ and $D$ has no separator of order less than $k$.
$D$ is \emph{minimal $k$-strong} if $D$ is $k$-strong, but $D-a$
is not $k$-strong for any arc $a\in A(D)$. A \emph{strong
component} is a maximal subdigraph of $D$ which is strong.

We call a path, directed or undirected, from $u$ to $v$ a
$(u,v)$-path. The set of the end-vertices of the edges in a
matching $M$ is denoted by $V(M)$, or $V(e)$ if $M=\{e\}$. The
symmetric difference of two sets $S_1$ and $S_2$, is denoted by
$S_1\vartriangle S_2$. $\newline$

Let $B_n$ denote the set of all matrices of order $n$ over the
Boolean algebra $\{0,1\}$. We call a matrix $A\in B_n$
\emph{reducible} if there exists a permutation matrix $P$, such
that $$P^{T}AP=\left[
\begin{matrix} B & 0 \cr C & D \end{matrix} \right],$$ where $B$
is an $l\times l$ matrix and $D$ is an $(n-l)\times(n-l)$ matrix,
for some $1\leq l \leq n-1$. $A$ is \emph{irreducible} if it is
not reducible. Let $k$ be an integer with $1 \leq k \leq n$. $A$
is called \emph{$k$-reducible} if there exists a permutation
matrix $P$, such that $$P^{T}AP=\left[\begin{matrix} A_{11} &
A_{12} & 0 \cr A_{21} & A_{22} & A_{23} \end{matrix} \right], $$
where $A_{11}$ and $[A_{22} \ \ A_{23} ]$ are square matrices of
order at least one and the size of the zero submatrix at the upper
right corner is $l\times (n-k+1-l)$, $1\leq l \leq n-1$. If $A$ is
not $k$-reducible, then $A$ is called \emph{$k$-irreducible}.

A matrix $A\in B_n$ is call \emph{partly decomposable} if there
exist permutation matrices $P$ and $Q$, such that $$PAQ=\left[
\begin{matrix} B & 0 \cr C & D \end{matrix} \right],$$ where $B$ is an $l\times l$
matrix and $D$ is an $(n-l)\times (n-l)$ matrix, for some $1\leq l
\leq n-1$. $A$ is \emph{fully indecomposable} if it is not partly
decomposable. Let $k$ be an integer with $0 \leq k \leq n$. $A$ is
called \emph{$k$-partly decomposable} if it contains an $l\times
(n-k+1-l)$ zero submatrix, for some $1\leq l \leq n-1$. A matrix
which is not $k$-partly decomposable is called
\emph{$k$-indecomposable}.

A \emph{diagonal} of a matrix $A=(a_{ij})\in B_n$ is a collection
$T$ of $n$ entries $a_{1i_1},a_{2i_2},\ldots,a_{ni_n}$ of $A$ such
that $\{i_1,i_2,\ldots, i_n\}=\{1,2,\ldots,n\}$. If $i_j=j$ for
$j=1$, 2, $\ldots$, n, we call the diagonal \emph{main diagonal}
of the matrix. $\newline$

Let $G$ be a bipartite graph with bipartition $(U,W)$, where
$U=\{u_1,\ldots,u_n\}$ and $W=\{w_1,\ldots,w_n\}$, and $M=\{u_i
w_i, 1\leq i \leq n\}$ a perfect matching of $G$. We form
$R(G)=(a_{ij})_{n\times n}$, where $a_{ij}=1$ if and only if $u_i
w_j\in E(G)$. Then $R(G)$ has a positive main diagonal, which
corresponds to $M$. We obtain a digraph $D=D(R(G)-I)$, where $I$
denote the identity matrix. On the contrary, given a digraph $D$,
we can get a bipartite graph $G=B(M(D)+I)$, which has a perfect
matching. Hence we have a corresponding between bipartite graphs
with a perfect matching and digraphs. We may get different $D$
from $G$, depending on how we choose the perfect matching $M$,
therefore we denote $D$ by $D=D(G,M)$. While $G$ is uniquely
determined by $D$, we denote it by $G=B(D)$. Clearly, such a
corresponding includes a bijection between $M$ and $V(D)$, and a
bijection between $E(G)\backslash M$ and $A(D)$. $D$ can also be
understood as obtained from $G$ by orienting all edges of $G$
towards the same partition and then contracting all edges of $M$.

There is a well-known equivalent property between the
1-extendibility of $G$ and the strong connectivity of $D$.
\begin{theorem}\label{theorem:lp1}(\cite{LP1}, Exercise 4.1.5)
Let $G$ be a bipartite graph and $M$ a perfect matching of $G$.
Then $D=D(G,M)$ is strong if and only if $G$ is 1-extendable.
\end{theorem}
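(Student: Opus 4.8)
The plan is to pass through $M$-alternating cycles, which are the natural common language for the two sides of the equivalence. First I would record the basic dictionary: since $M=\{u_iw_i\}$ and there is an arc $i\to j$ in $D$ exactly when $u_iw_j\in E(G)$ with $i\neq j$, an $M$-alternating cycle of $G$ meeting precisely the matching edges $\{u_iw_i: i\in S\}$ (in a given cyclic order) corresponds to a directed cycle of $D$ on the vertex set $S$, with the non-matching edges of the alternating cycle being exactly the arcs of that directed cycle. This yields a bijection between the $M$-alternating cycles of $G$ and the directed cycles of $D$, compatible with the bijection $E(G)\setminus M\leftrightarrow A(D)$ already noted in the construction of $D=D(G,M)$.

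Next I would invoke the standard fact that, for a graph carrying a perfect matching $M$, an edge $e\notin M$ lies in some perfect matching if and only if $e$ lies on an $M$-alternating cycle: in one direction, if $e\in M'$ for a perfect matching $M'$, then the component of $M\vartriangle M'$ containing $e$ is an $M$-alternating cycle, since $M$ and $M'$ are both perfect; in the other, $M\vartriangle E(C)$ is a perfect matching through $e$. Every edge of $M$ lies trivially in the perfect matching $M$. Combining this with the dictionary, the assertion ``every edge of $G$ lies in a perfect matching'' is exactly the assertion ``every arc of $D$ lies on a directed cycle''.

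It then remains to reconcile the two connectivity requirements. On the one hand, the underlying undirected graph of $D$ is obtained from $G$ by contracting the edges of $M$, so $G$ is connected if and only if $D$ is weakly connected. On the other hand, $D$ is strong precisely when $D$ is weakly connected and every arc of $D$ lies on a directed cycle: if $D$ is strong, then for any arc $(x,y)$ a directed $(y,x)$-path closes up with it into a directed cycle, and strongness plainly gives weak connectivity; conversely, the property ``$(x,y)\in A(D)$ implies $x$ and $y$ are mutually reachable'' propagates mutual reachability along any walk in the underlying graph, so if $D$ is weakly connected all of $V(D)$ lies in one strong component. Chaining the three reductions: $D$ strong $\iff$ $D$ weakly connected and every arc on a directed cycle $\iff$ $G$ connected and every edge in a perfect matching $\iff$ $G$ is $1$-extendable. (The one degenerate case is $|V(G)|=2$, i.e. $G=K_2$, where $D$ is a single vertex, vacuously strong, while $K_2$ is excluded from $1$-extendability by the constraint $k\le(|V(G)|-1)/2$; we tacitly assume $|V(G)|\ge 4$.)

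I expect the only genuine subtlety to be this bookkeeping around connectivity: the definition of $1$-extendability builds in connectedness of $G$, whereas strongness of $D$ does not visibly force $G$ to be connected, so the contraction observation and the reformulation of strongness as ``weakly connected plus every arc on a cycle'' are what make the equivalence close up cleanly. Everything else is the routine translation through the dictionary established in the first step.
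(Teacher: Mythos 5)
Your argument is correct. Note that the paper itself gives no proof of this statement; it is quoted as Exercise 4.1.5 of the Lov\'asz--Plummer reference \cite{LP1}, so there is nothing internal to compare against. Your route is the standard one and is entirely consistent with the machinery the paper does use elsewhere: the bijection between $M$-alternating cycles of $G$ and directed cycles of $D$ is exactly what the authors exploit in the proof of Theorem \ref{theorem:CeqE} (the set $\mathcal{C}_0\cup\mathcal{C}_1$ of directed cycles pulled back to alternating cycles, with $E(\mathcal{C})\vartriangle M$ the new perfect matching) and again in Theorem \ref{theorem:components} (the component of $M\vartriangle M'$ containing a non-fixed edge being an alternating cycle that projects to a directed cycle of $D$). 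The three links in your chain --- the cycle dictionary, the characterization of allowed edges via $M$-alternating cycles, and the reformulation of strongness as ``weakly connected and every arc lies on a directed cycle'' --- are each sound, and you are right that the only delicate point is the connectivity bookkeeping, since the paper's definition of $1$-extendability builds connectedness of $G$ into the definition while strongness of $D$ only yields weak connectivity of $D$, which you correctly transfer back to $G$ through the contraction of the matching edges. Flagging the degenerate case $G=K_2$ is appropriate; the paper is silent on it.
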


The following is another interesting relation between $G$ and $D$.
\begin{theorem} \label{theorem:lp2}
(\cite{LP1}, Exercise 4.3.3) Let $G$ be a bipartite graph with a
unique perfect matching $M$. Then $D=D(G,M)$ is acyclic.
\end{theorem}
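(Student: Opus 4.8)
The plan is to prove the contrapositive in the sharpened form: \emph{if $D=D(G,M)$ contains a directed cycle, then $G$ has more than one perfect matching.} So I would start by assuming $D$ has a directed cycle; since $D=D(R(G)-I)$ is loopless, this cycle can be written as $i_1\to i_2\to\cdots\to i_k\to i_1$ with $k\ge 2$ and $i_1,\dots,i_k$ pairwise distinct.

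The first step is to unwind the correspondence. An arc from $i$ to $j$ in $D$ exists exactly when the $(i,j)$-entry of $R(G)-I$ equals $1$, i.e.\ exactly when $i\neq j$ and $u_iw_j\in E(G)$; equivalently, via the ``orient all edges toward one partition and contract $M$'' description, exactly when $u_iw_j$ is a \emph{non-matching} edge of $G$. Hence the directed cycle above supplies edges $u_{i_1}w_{i_2},u_{i_2}w_{i_3},\dots,u_{i_{k-1}}w_{i_k},u_{i_k}w_{i_1}$ in $E(G)\setminus M$. Adjoining the matching edges $u_{i_1}w_{i_1},\dots,u_{i_k}w_{i_k}$ produces a closed walk $C=w_{i_1}u_{i_1}w_{i_2}u_{i_2}\cdots w_{i_k}u_{i_k}w_{i_1}$ of length $2k$ in $G$, whose $2k$ vertices are distinct (inherited from the distinctness of $i_1,\dots,i_k$), so $C$ is a genuine cycle, and it is $M$-alternating by construction.

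The second step is routine: forming the symmetric difference $M'=M\vartriangle E(C)$ replaces, along $C$, each matching edge by a non-matching edge while leaving every vertex covered, so $M'$ is again a perfect matching of $G$; moreover $M'\neq M$ because $C$ contains the non-matching edge $u_{i_1}w_{i_2}$. This contradicts the uniqueness of $M$, so $D$ must be acyclic. I do not expect a real obstacle here; the only points needing care are (i) checking that arcs of $D$ correspond precisely to the \emph{non}-matching edges of $G$, which is what makes $C$ alternating and $M'$ distinct from $M$, and (ii) noting that a directed cycle of the loopless digraph $D$ visits distinct vertices, which is what makes $C$ a cycle rather than merely a closed walk. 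As a byproduct one obtains the converse for free: any $M$-alternating cycle of $G$ maps back, under the same dictionary, to a directed cycle of $D$, so in fact $D$ is acyclic if and only if $M$ is the unique perfect matching of $G$.
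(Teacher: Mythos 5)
Your proof is correct. Note that the paper itself gives no proof of this statement --- it is quoted from Lov\'asz and Plummer as Exercise 4.3.3 --- so there is nothing to compare against; your contrapositive argument (a directed cycle of $D$ lifts to an $M$-alternating cycle of $G$, whose symmetric difference with $M$ yields a second perfect matching) is the standard and expected one, and you have handled the two points that need care, namely that arcs of $D$ correspond exactly to edges of $E(G)\setminus M$ and that the loopless-ness of $D$ forces $k\ge 2$ with distinct indices, so the lifted closed walk is a genuine cycle.
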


In this paper we further discuss the relation between $G$ and $D$,
as well as their relations with combinatorial matrices.

\section{Extendibility versus Connectivity}
Below is a generalization of Theorem \ref{theorem:lp1}, which has
been stated in \cite{Thm2006} without a proof.

\begin{theorem}\label{theorem:CeqE}
Let $G$ be a bipartite graph and $M$ a perfect matching of $G$.
Then $D=D(G,M)$ is $k$-strong if and only if $G$ is
$k$-extendable.
\end{theorem}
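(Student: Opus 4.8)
The plan is to establish the equivalence by translating both the $k$-extendability condition on $G$ and the $k$-strong condition on $D=D(G,M)$ into a common combinatorial statement, and then play the two directions off against each other using the bijections between $M$ and $V(D)$ and between $E(G)\setminus M$ and $A(D)$. Recall that $G$ is $k$-extendable if and only if (by a standard theorem of Plummer, which I would invoke) $G$ is connected, has a matching of size $k$, and for every set $S$ of $k$ independent edges, $G - V(S)$ has a perfect matching; since $G$ is bipartite with balanced bipartition, the latter is equivalent (via König/Hall) to a deficiency condition. On the digraph side, $D$ is $k$-strong iff $|V(D)| \ge k+1$ and $D$ has no separator of size $< k$, i.e.\ for every vertex set $Z$ with $|Z| \le k-1$, the digraph $D - Z$ is strong. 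The heart of the argument is a dictionary lemma: deleting the $k$ matching edges $u_{i_1}w_{i_1},\dots,u_{i_k}w_{i_k}$ together with their endpoints from $G$ corresponds, on the digraph side, to deleting the $k$ vertices $v_{i_1},\dots,v_{i_k}$ from $D$; and a perfect matching of $G - V(S)$ exists if and only if $D$ restricted appropriately behaves well. So I would first prove the ``contraction lemma'': for a perfect matching $S = \{u_{i_1}w_{i_1},\dots,u_{i_j}w_{i_j}\}$-type subset, $G - V(S)$ has a perfect matching iff the induced subdigraph $D - \{v_{i_1},\dots,v_{i_j}\}$ has the property that every vertex lies on a cycle — equivalently, by Theorem~\ref{theorem:lp1} applied to the smaller graph, iff $B(D - \{v_{i_1},\dots,v_{i_j}\})$ is $1$-extendable, hence iff $D - \{v_{i_1},\dots,v_{i_j}\}$ is strong.

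With that lemma in hand, the forward direction ($D$ $k$-strong $\Rightarrow$ $G$ $k$-extendable) goes as follows. First $|V(D)| = |M| \ge k+1$ gives $|V(G)| \ge 2k+2$, so $G$ has a matching of size $k$ and the size constraint $k \le (|V(G)|-1)/2$ is met. Now take any matching $S$ of size $k$ in $G$. The subtlety is that $S$ need not consist of matching edges of $M$, so I cannot directly read off a vertex set of $D$. The standard device is to consider the symmetric difference $M \vartriangle S$ (which the paper has flagged by defining $\vartriangle$), whose components are paths and even cycles; using this I can modify $M$ to a perfect matching $M'$ that contains $S$ entirely among its edges, OR — more carefully — I reduce to showing that $G$ with $V(S)$ removed has a perfect matching by an alternating-path argument that, after re-rooting, lands me in a situation where I am deleting $\le k$ vertices from a strong digraph and invoking the contraction lemma plus $1$-extendability of the residual graph. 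The reverse direction ($G$ $k$-extendable $\Rightarrow$ $D$ $k$-strong) is cleaner: given a separator $Z$ of $D$ with $|Z| = j \le k-1$, the corresponding $j$ matching edges form a matching $S_0$ of size $j$; extend $S_0$ to a matching $S$ of size $k$ (possible since $G$ is $k$-extendable it contains such matchings, and a short argument extends $S_0$), note $S$ lies in some perfect matching by $k$-extendability, so $G - V(S) \subseteq G - V(S_0)$ has a perfect matching, so by the contraction lemma $D - Z$ is strong, contradicting that $Z$ is a separator.

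The step I expect to be the main obstacle is the forward direction's handling of an \emph{arbitrary} size-$k$ matching $S$ rather than one aligned with $M$: I must show $G - V(S)$ has a perfect matching using only that $D = D(G,M)$ is $k$-strong, and the mismatch between ``edges of $S$'' and ``vertices of $D$'' is exactly where care is needed. The clean way is: the $k$ edges of $S$ meet at most $k$ vertices in $U$, say $u_{i_1},\dots,u_{i_r}$ with $r \le k$; deleting these $r$ vertices from $D$ (they biject with $r$ vertices $v_{i_1},\dots,v_{i_r}$) leaves a strong digraph since $r \le k \le$, wait, we need $r \le k-1$ for the separator definition — so instead one argues directly that $D - \{v_{i_1},\dots,v_{i_r}\}$ still has the property ``every vertex on a cycle'' because killing $\le k$ vertices of a $k$-strong digraph cannot isolate a vertex from all cycles (a vertex off every cycle would yield a separator of size $< k$ after a small reduction). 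Then translate back: $G$ minus the $U$-side endpoints of $S$ still has a perfect matching $M''$; and because $S$ is an \emph{independent} set of edges whose $U$-endpoints are exactly those deleted, one can splice $S$ into $M''$ to obtain a perfect matching of $G$ containing $S$. I would write this splicing carefully, as it is the one place where the bipartite structure (each edge of $S$ has one endpoint in $U$, one in $W$, all distinct) is doing real work.
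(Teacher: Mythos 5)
Your plan founders on the ``contraction lemma,'' which is false as stated, and the error propagates through both directions. For a set $S=\{u_{i_1}w_{i_1},\dots,u_{i_j}w_{i_j}\}\subseteq M$, the graph $G-V(S)$ \emph{always} has a perfect matching, namely $M\setminus S$, so ``$G-V(S)$ has a perfect matching'' cannot be equivalent to ``$D-\{v_{i_1},\dots,v_{i_j}\}$ is strong''; nor is ``every vertex lies on a cycle'' equivalent to strong connectivity (take two disjoint directed cycles). The correct dictionary, and the one the paper uses, is Theorem \ref{theorem:lp1} applied to the smaller graph: $D-\{v_{i_1},\dots,v_{i_j}\}$ is strong if and only if $G-V(S)$ is \emph{1-extendable}. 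Consequently your reverse direction, which deduces ``$D-Z$ is strong'' from ``$G-V(S_0)$ has a perfect matching,'' is a non sequitur. The repair is short but different: for each edge $e$ of $G-V(S_0)$, the matching $S_0\cup\{e\}$ has size at most $k$ and so extends to a perfect matching of $G$, whence every edge of $G-V(S_0)$ lies in a perfect matching of $G-V(S_0)$; that is 1-extendability of $G-V(S_0)$, and Theorem \ref{theorem:lp1} then gives that $D-Z$ is strong.

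The forward direction has the more serious gap, and you correctly locate it: an arbitrary matching $S$ of size $k$ with $S\cap M=\emptyset$. Your proposed mechanism does not work. The $U$-endpoints of $S$ correspond to $k$ (not at most $k-1$) vertices of $D$, and deleting them does not model $G-V(S)$: when $S\cap M=\emptyset$, each edge of $M$ meeting $V(S)$ loses only one endpoint, so $G-V(S)$ is not of the form $B(D-Z)$ for any vertex set $Z$, and the claim that deleting $k$ vertices from a $k$-strong digraph cannot put a vertex off every cycle is both unproved and insufficient (even if true, ``every vertex on a cycle'' would not yield a perfect matching of $G-V(S)$). The paper's resolution is genuinely different: the $k$ edges of $S$ correspond to $k$ \emph{arcs} of $D$, which form vertex-disjoint paths and cycles; deleting the cycle vertices and the internal path vertices ($k-t$ of them, where $t$ is the number of paths) leaves a $t$-strong digraph, and Lemma \ref{lemma:weak_Menger} supplies $t$ disjoint paths that close the $t$ paths into vertex-disjoint cycles. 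These pull back to disjoint $M$-alternating cycles of $G$ covering $S$, and the symmetric difference of their edge set with $M$ is a perfect matching containing $S$; the mixed case $S\cap M\ne\emptyset$ is then handled by induction on $k$. Without some such Menger-type linkage argument your outline does not close.
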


We prove Theorem \ref{theorem:CeqE} in this section and show some
interesting applications of it. We need Menger's Theorem in our
proof.

\begin{theorem} \label{theorem:_Menger} (Menger \cite{M3})
Let $D$ be a digraph. Then $D$ is $k$-strong if and only if
$|V(D)|\geq k+1$ and $D$ contains $k$ internally vertex disjoint
($s$,$t$)-paths for every choice of distinct vertices $s,t \in V$.
\end{theorem}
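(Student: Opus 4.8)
The plan is to derive this global, $k$-connectivity form of Menger's theorem from the classical local (two-vertex) version, and to isolate where the genuine work lies. Recall the local statement: for a digraph $D$ and distinct vertices $s,t$ with no arc from $s$ to $t$, the maximum number of internally vertex-disjoint $(s,t)$-paths equals the minimum size of a set $S\subseteq V(D)\setminus\{s,t\}$ whose deletion destroys every directed $(s,t)$-path. This local equality is the deep part; I would establish it either by the standard induction on $|A(D)|$ (deleting or contracting a well-chosen arc and splicing paths back together), or by the cleaner route of splitting each internal vertex $v$ into $v^-,v^+$ joined by a unit-capacity arc, so that internal-vertex-disjointness becomes arc-capacity, and then invoking the max-flow--min-cut theorem. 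I expect this local theorem to be the main obstacle; everything that follows is bookkeeping built on top of it.

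Granting the local version, I would dispose of the easy ($\Leftarrow$) direction first. Assume $|V(D)|\ge k+1$ and that every ordered pair of distinct vertices is joined by $k$ internally disjoint paths. Let $S\subset V(D)$ with $|S|<k$, and pick any distinct $x,y\in V(D)\setminus S$. The interiors of the $k$ internally disjoint $(x,y)$-paths are pairwise disjoint, so each vertex of $S$ lies on the interior of at most one of them; since $|S|<k$ and $x,y\notin S$, at least one path avoids $S$ entirely. Hence $x$ still reaches $y$ in $D-S$, so $D-S$ is strong. As $S$ was an arbitrary set of size $<k$ and $|V(D)|\ge k+1$, the digraph $D$ is $k$-strong.

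For the ($\Rightarrow$) direction, assume $D$ is $k$-strong; then $|V(D)|\ge k+1$ by definition, and I must produce $k$ internally disjoint $(s,t)$-paths for every ordered pair $s\ne t$. If there is no arc $s\to t$, the local theorem applies directly: were the minimum $(s,t)$-separator smaller than $k$, deleting it would leave $D$ with no $(s,t)$-path and hence non-strong, contradicting $k$-strongness, so the minimum separator has size $\ge k$ and we get $k$ disjoint paths. The subtle case, which I regard as the combinatorial heart of the reduction, is an arc $s\to t$: here I pass to $D'=D-(s\to t)$ and aim for $k-1$ disjoint $(s,t)$-paths in $D'$, the arc itself supplying the $k$-th (it has no interior, so disjointness is automatic). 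Suppose some $S\subseteq V(D)\setminus\{s,t\}$ with $|S|\le k-2$ separated $s$ from $t$ in $D'$, and let $R$ be the set reachable from $s$ in $D'-S$; then $t\notin R$, and the only arc from $R$ to $V(D)\setminus(R\cup S)$ in $D-S$ is $s\to t$. If $R\setminus\{s\}\ne\emptyset$, then $S\cup\{s\}$ is a separator of $D$ of size $\le k-1$, since after its removal the vertices of $R\setminus\{s\}$ can no longer reach $t$. If instead $R=\{s\}$, then all out-neighbours of $s$ lie in $S\cup\{t\}$, a set of size $\le k-1$ whose removal leaves $s$ with no out-arc and hence unable to reach the remaining vertices. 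Either way $D$ has a separator of size $<k$, contradicting $k$-strongness, so the minimum $(s,t)$-separator in $D'$ is at least $k-1$ and the local theorem delivers the required $k-1$ paths, completing the argument.
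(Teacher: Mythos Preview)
The paper does not actually prove this theorem: it is quoted as the classical result of Menger (citation \cite{M3}) and stated without any argument, then immediately specialized to the weaker Lemma~\ref{lemma:weak_Menger} that the authors really need. So there is no ``paper's own proof'' to compare your attempt against.

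That said, your derivation of the global $k$-strong version from the local two-vertex Menger theorem is correct and standard. The $(\Leftarrow)$ direction is fine, and in the $(\Rightarrow)$ direction your handling of the delicate case where an arc $s\to t$ is present --- passing to $D'=D-(s\to t)$, showing any putative $(s,t)$-separator $S$ of size $\le k-2$ in $D'$ yields a separator $S\cup\{s\}$ or $S\cup\{t\}$ of size $\le k-1$ in $D$, and then appending the arc $s\to t$ as the $k$-th path --- is carried out carefully and correctly. The one part you leave as a plan rather than a proof is the local Menger equality itself; both routes you propose (induction on $|A(D)|$, or vertex-splitting plus max-flow--min-cut) are legitimate and well known, so this is acceptable as a sketch but would need to be filled in for a self-contained proof.
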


Actually we use an equivalent form of Menger's Theorem. Further
more, we only need the following weaken form, which appears as an
exercise in \cite{BG1}.

\begin{lemma}\label{lemma:weak_Menger}(\cite{BG1},~Exercise~7.17)
Let $D$ be a $k$-strong digraph. Let $x_1$, $x_2$, \ldots,
$x_{k-1}$, $y_1$, $y_2$, \ldots, $y_{k-1}$ be distinct vertices of
$D$, then there are $k$ independent paths in $D$, starting at
$x_i$, $0\leq i \leq k-1$ and ending at $y_j$, $0\leq j\leq k-1$.
\end{lemma}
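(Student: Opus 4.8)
The plan is to derive the lemma from a single application of Menger's theorem after adjoining an artificial source and an artificial sink. Write $X=\{x_i:0\le i\le k-1\}$ for the set of prescribed initial vertices and $Y=\{y_j:0\le j\le k-1\}$ for the set of prescribed terminal vertices; by hypothesis $X\cap Y=\emptyset$ and $|X|=|Y|=k$, so $|V(D)|\ge 2k$. Form a digraph $D'$ from $D$ by adding two new vertices $s$ and $t$ together with all arcs $sx_i$ and all arcs $y_jt$, $0\le i,j\le k-1$. Then the out-neighbours of $s$ are exactly the vertices of $X$ and the in-neighbours of $t$ are exactly the vertices of $Y$; since $D'$ has no arc $st$, every $(s,t)$-path of $D'$ has the form $s,x_i,\dots,y_j,t$ with $x_i\ne y_j$, and deleting its two end-vertices leaves an $(x_i,y_j)$-path in $D$.

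The heart of the argument is a lower bound on the size of an $(s,t)$-vertex-separator of $D'$. I claim that no set $Z\subseteq V(D')\setminus\{s,t\}=V(D)$ with $|Z|\le k-1$ separates $s$ from $t$ in $D'$. Indeed, since $D$ is $k$-strong and $|Z|<k$, the digraph $D-Z$ is still strong; and since $|X|=|Y|=k>|Z|$ there exist $x_i\notin Z$ and $y_j\notin Z$. Any $(x_i,y_j)$-path of $D-Z$, prefixed by the arc $sx_i$ and extended by the arc $y_jt$, is an $(s,t)$-path of $D'-Z$. Hence every $(s,t)$-vertex-separator of $D'$ has at least $k$ vertices, so by Menger's theorem in the internally-disjoint-paths versus vertex-cut form (the equivalent form of Theorem \ref{theorem:_Menger} alluded to above), $D'$ contains $k$ internally vertex-disjoint $(s,t)$-paths $Q_0,\dots,Q_{k-1}$.

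It remains to read off the required paths of $D$. Removing $s$ and $t$ from each $Q_\ell$ produces paths $P_0,\dots,P_{k-1}$ of $D$ that are pairwise vertex-disjoint (the $Q_\ell$ are internally disjoint and share only $s$ and $t$), with $P_\ell$ running from some $x_{a(\ell)}\in X$ to some $y_{b(\ell)}\in Y$. Because $x_{a(\ell)}$ is the second vertex, hence an internal vertex, of $Q_\ell$, the map $\ell\mapsto a(\ell)$ is injective; as there are $k$ paths and $k$ members of $X$, it is a bijection onto $X$, and by disjointness no $P_\ell$ meets $X$ anywhere except at $x_{a(\ell)}$. The same applies to $\ell\mapsto b(\ell)$ and $Y$. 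Therefore $P_0,\dots,P_{k-1}$ are $k$ independent paths whose initial vertices are exactly $x_0,\dots,x_{k-1}$ and whose terminal vertices are exactly $y_0,\dots,y_{k-1}$, which is the statement of the lemma.

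I do not foresee a conceptual obstacle; the two steps that need attention are the separator bound of the second paragraph and the endpoint bookkeeping of the third. If one prefers to quote Theorem \ref{theorem:_Menger} verbatim rather than its local vertex form, the alternative is to add to $D'$ also the arcs $tx_i$ and $y_js$ for all $i,j$ and to verify that the resulting digraph is itself $k$-strong; this verification, by a short case analysis on whether $s$ and/or $t$ lies in a putative separator and using the same two facts (that deleting fewer than $k$ vertices from $D$ leaves it strong, and that fewer than $k$ vertices cannot cover $X$ or $Y$), is the most laborious part of that variant, after which Theorem \ref{theorem:_Menger} delivers the $k$ internally disjoint $(s,t)$-paths immediately.
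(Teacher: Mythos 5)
Your proof is correct. Note, though, that the paper itself gives no proof of this lemma---it is quoted as Exercise 7.17 of \cite{BG1} and used as a black box in the proof of Theorem \ref{theorem:CeqE}---so there is no internal argument to compare yours against; what you have supplied is the standard derivation. Your reading of the (slightly garbled) statement as asserting a system of $k$ pairwise disjoint paths from a $k$-set $X$ onto a $k$-set $Y$, with all $2k$ vertices distinct, matches how the lemma is actually invoked in the paper (there $X$ and $Y$ consist of the endpoints of $t$ independent paths, hence $2t$ distinct vertices). The three places that need care all check out: (i) the separator bound is sound, because $|Z|\le k-1$ forces $D-Z$ to be strong by the definition of $k$-strong, and $|X|=|Y|=k>|Z|$ leaves uncovered vertices in both $X$ and $Y$; (ii) the appeal to the local (paths versus cuts) form of Menger's theorem is legitimate since $st\notin A(D')$, and is consistent with the paper's own remark that an ``equivalent form'' of Theorem \ref{theorem:_Menger} is what is really used; (iii) the bijectivity of $\ell\mapsto a(\ell)$ follows exactly as you say from the internal disjointness of the $Q_\ell$, and it in turn guarantees that no $P_\ell$ meets $X\cup Y$ except at its own two ends. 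The sketched alternative (adding also the arcs $tx_i$ and $y_js$ and checking that the augmented digraph is $k$-strong, so that Theorem \ref{theorem:_Menger} applies verbatim) also works, since no $(s,t)$-path can traverse an arc leaving $t$ or entering $s$, so the extra arcs never appear in the extracted paths.
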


Now comes the proof of Theorem \ref{theorem:CeqE}.

\begin{proof} Let $D$ be
$k$-strong. We use induction on $k$ to prove that $G$ is
$k$-extendable. When $k=1$, the conclusion follows from Theorem
\ref{theorem:lp1}. Suppose that the conclusion holds for all
integers $1\leq m < k$. Now we prove that an arbitrary matching
$M_0$ of size $k$ in $G$ is contained in a perfect matching of
$G$.

Firstly we assume that $|M_0 \cap M|\geq 1$. Let $e\in M_0 \cap M$
and the vertex in $D$ corresponding to $e$ be $v_e$. Let $G^\prime
= G-V(e)$, $D^\prime=D-v_e$, and $M^\prime=M\backslash {e}$. Then
$D^\prime$ is $(k-1)$-strong and $D^\prime=D(G^\prime,M^\prime)$.
By the induction hypothesis, $G^\prime$ is $(k-1)$-extendable.
Hence $M_0\backslash\{e\}$, which is a matching of size $k-1$ in
$G^\prime$, is contained in a perfect matching $M^\prime$ of
$G^\prime$. Then $M^\prime\cup \{e\}$ is a perfect matching of $G$
containing $M_0$.

Now we handle the case that $M_0 \cap M=\emptyset$. In this case,
$M_0$ corresponds to an arc set $A_0$ of order $k$ of $D$. The
arcs in $A_0$ form some independent cycles and paths in $D$. Let
the set of cycles formed be $\mathcal{C}_0=\{C_0,\ C_1,\ldots,\
C_{s-1}\}$ and the set of paths formed be $\mathcal{P}_0=\{P_0,\
P_1,\ldots,\ P_{t-1}\}$. Let the starting and ending vertices of
$P_i$ be $u_i$ and $v_i$, $0\leq i \leq t-1$. Let $V_0$ be the
union of the set of vertices of cycles in $\mathcal{C}_0$ and the
set of internal vertices of paths in $\mathcal{P}_0$. Then
$|V_0|=k-t$. By definition, $D-V_0$ is $t$-strong. By Lemma
\ref{lemma:weak_Menger}, there are $t$ independent paths in $D$
starting at $v_i$, $0\leq i\leq t-1$, and ending at $u_j$, $0\leq
j\leq t-1$. Such paths, together with the paths in
$\mathcal{P}_0$, form some independent cycles in $D$. Denote the
set of such cycles by $\mathcal{C}_1$. Then $\mathcal{C}_0\cup
\mathcal{C}_1$ is a set of independent cycles in $D$ which covers
all arcs in $A_0$. $\mathcal{C}_0\cup \mathcal{C}_1$ corresponds
to a set $\mathcal{C}$ of independent $M$-alternating cycles in
$G$. Let the set of edges of cycles in $\mathcal{C}$ be
$E(\mathcal{C})$, then $E(\mathcal{C})\vartriangle M$ is a perfect
matching of $G$ containing $M_0$. Hence $G$ is $k$-extendable.


Conversely, suppose that $G$ is $k$-extendable. To see that $D$ is
$k$-strong, let $\{v_1, v_2, \ldots, v_{k-1}\}$ be a set of $k-1$
vertices in $D$. Denote by $e_i$ the edge in $G$ corresponds to
$v_i$, $1\leq i\leq k-1$. Let $G^\prime=G-\cup^{k-1}_{i=1}V(e_i)$,
$D^\prime=D-\{v_i:1\leq i\leq k-1\}$ and $M^\prime=M\backslash
\{e_i:1\leq i\leq k-1\}$. Then $D^\prime=G(G^\prime,M^\prime)$.
Since $G$ is $k$-extendable, $G^\prime$ is 1-extendable. Hence
$D^\prime$ is strong by Theorem \ref{theorem:lp1} and $D$ is
$k$-strong.
\end{proof}

\begin{theorem}\label{theorem:minimal}
Let $G$ be a bipartite graph and $M$ a perfect matching of $G$. If
$G$ is minimal $k$-extendable then $D=D(G,M)$ is minimal
$k$-strong.
\end{theorem}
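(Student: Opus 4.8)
The plan is to exploit the correspondence between $G$ and $D$ together with Theorem \ref{theorem:CeqE}, reducing the statement to a purely structural observation: deleting an arc $a$ from $D$ corresponds exactly to deleting a non-matching edge $e$ from $G$ (the edge of $E(G)\setminus M$ paired with $a$ under the bijection), and in that case $D-a = D(G-e, M)$ since $M$ is untouched. So I would first recall that $G$ is $k$-extendable, hence by Theorem \ref{theorem:CeqE} the digraph $D=D(G,M)$ is $k$-strong; in particular $|V(D)|\ge k+1$, which handles the cardinality requirement in the definition of $k$-strong. It remains to show that for every arc $a\in A(D)$, the digraph $D-a$ fails to be $k$-strong.

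Fix $a\in A(D)$ and let $e\in E(G)\setminus M$ be the corresponding edge. The key point is that $D-a = D(G-e,\,M)$: removing the arc $a$ from $D$ removes exactly the edge $e$ from $E(G)\setminus M$ while leaving the perfect matching $M$ (hence all vertices of $D$, all of $M$, and all other non-matching edges) intact. Since $G$ is minimal $k$-extendable, $G-e$ is not $k$-extendable. Now apply Theorem \ref{theorem:CeqE} in the contrapositive direction to the pair $(G-e, M)$: since $G-e$ is not $k$-extendable, $D(G-e,M) = D-a$ is not $k$-strong. As $a$ was arbitrary, $D$ is minimal $k$-strong.

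I should be a little careful about one technical wrinkle in invoking Theorem \ref{theorem:CeqE} for $G-e$: the theorem as stated requires $M$ to be a perfect matching of the bipartite graph in question, and $M\subseteq E(G-e)$ since $e\notin M$, so $M$ remains a perfect matching of $G-e$ and the hypothesis is met. One should also confirm that $G-e$ is still bipartite with the same bipartition (immediate) and that the construction $D(\,\cdot\,,M)$ is well-defined for it (it is, since $M$ is a perfect matching of $G-e$). With these checks in place the argument is complete.

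The only genuine obstacle is the verification that $D-a=D(G-e,M)$ at the level of the explicit matrix/orientation construction — i.e., that deleting one arc of $D$ translates precisely to deleting one non-matching edge of $G$ under the stated bijection between $A(D)$ and $E(G)\setminus M$, with everything else unchanged. This is essentially built into the correspondence described before Theorem \ref{theorem:lp1} (the bijection between $E(G)\setminus M$ and $A(D)$ and the bijection between $M$ and $V(D)$), so it is routine; no deeper idea is needed. I note in passing that the converse implication is false in general — $D$ minimal $k$-strong need not force $G$ minimal $k$-extendable — because deleting a matching edge $e\in M$ from $G$ has no arc-deletion counterpart in $D$ (it deletes a vertex), so the one-directional statement of the theorem is the natural one.
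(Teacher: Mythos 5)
Your argument is correct and is essentially identical to the paper's own proof: both invoke Theorem \ref{theorem:CeqE} to get that $D$ is $k$-strong, then use the identity $D-a=D(G-e,M)$ for the edge $e\in E(G)\setminus M$ corresponding to $a$, together with the minimality of $G$ and Theorem \ref{theorem:CeqE} again, to conclude $D-a$ is not $k$-strong. Your extra checks (that $M$ survives in $G-e$, and the remark on why the converse fails) are sound but not needed beyond what the paper records.
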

\begin{proof}
Suppose that $G$ is minimal $k$-extendable. By Theorem
\ref{theorem:CeqE}, $D$ is $k$-strong. Let $a$ be an arc of $D$
and $e$ be the edge corresponding to $a$ in $G$. Then
$D-a=D(G-e,M)$. By the minimality of $G$, $G-e$ is not
$k$-extendable, hence $D-a$ is not $k$-strong by Theorem
\ref{theorem:CeqE}. By the arbitrary of $a$, $D$ is minimal
$k$-strong.
\end{proof}

The converse of Theorem \ref{theorem:minimal} does not generally
hold, that is, $G$ does not need to be minimal $k$-extendable if
$D=D(G,M)$ is minimal $k$-strong. For example, we show a minimal
strong digraph $D_0$ in Figure \ref{fig:digraph} and $G_0=B(D_0)$,
which is not minimal 1-extendable, in Figure \ref{fig:graph}.

\begin{figure}[htbp]
\begin{minipage}[t]{0.5\linewidth}
\centering
\includegraphics[width=0.8\linewidth]{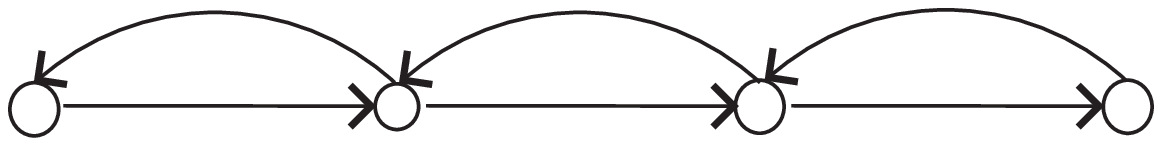}
\caption{A minimal strong digraph $D_0$} \label{fig:digraph}
\end{minipage}
\begin{minipage}[t]{0.5\linewidth}
\centering
\includegraphics[width=0.8\linewidth]{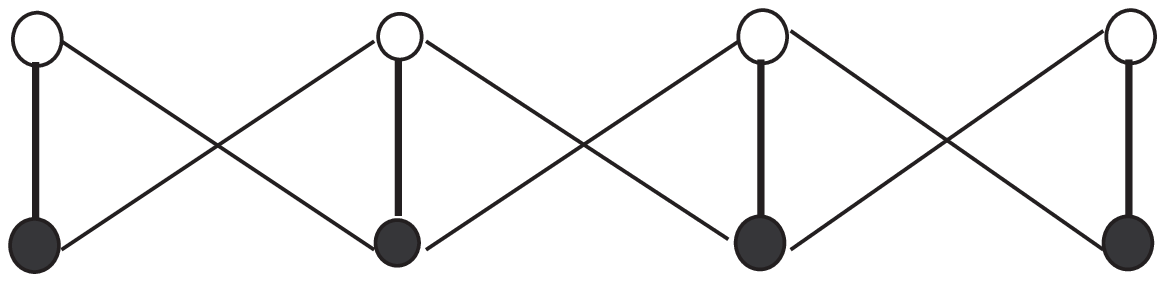}\label{fig:graph}
\caption{$G_0=B(D_0)$}
\end{minipage}
\end{figure}

$\newline$

There are many parallel results on $k$-extendable bipartite graphs
and $k$-strong digraphs. Theorem \ref{theorem:CeqE} and Theorem
\ref{theorem:minimal} help to explain such a similarity between
these two classes of graphs. In the rest of this section, we will
illustrate some such results.

Our first demonstrations are the well-known ear decompositions of
strong digraphs and 1-extendable bipartite graphs.

An \emph{ear decomposition} of a digraph $D$ is a sequence
$\mathcal{E}= \{P_0, P_1, ..., P_t\}$, where $P_0$ is a cycle and
each $P_i$ is a path, or a cycle with the following properties:
$\newline$(a) $P_i$ and $P_j$ are arc disjoint when $i\neq j$.
$\newline$(b) For each $i=1,...,t$, if $P_i$ is a cycle, then it
has precisely one vertex in common with $V(D_{i-1})$. Otherwise
the end-vertices of $P_i$ are distinct vertices of $V(D_{i-1})$
and no other vertex of $P_i$ belongs to $V(D_{i-1})$. Here $D_i$
denotes the digraph with vertices $\bigcup _{j=0} ^{i} V(P_j)$ and
arcs $\bigcup _{j=0} ^{i} A(P_j)$.
$\newline$ (c) $\bigcup _{j=0} ^{t} V(P_j)=V(D)$ and $\bigcup
_{j=0} ^{t} A(P_j)=A(D)$.

\begin{theorem} (\cite{BG1}, Theorem 7.2.2)
A digraph is strong if and only if it has an ear decomposition.
Furthermore, if $D$ is strong, then for every vertex $v$, every
cycle $C$ containing $v$ can be used as starting cycle $P_0$ for
an ear decomposition of $D$.
\end{theorem}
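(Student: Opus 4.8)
The plan is to prove the two implications separately, deriving the strengthened ``furthermore'' clause as a byproduct of the harder (necessity) direction. Throughout I assume $|V(D)|\geq 2$, since a one-vertex digraph is a vacuous edge case with no cycles.

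For the \emph{sufficiency} direction (an ear decomposition $\mathcal{E}=\{P_0,\dots,P_t\}$ forces $D$ to be strong), I would show by induction on $i$ that each $D_i$ is strong. The base case $D_0=P_0$ is a cycle, hence strong. For the step, assume $D_{i-1}$ is strong. If $P_i$ is a cycle meeting $V(D_{i-1})$ in a single vertex $z$, then every vertex of $P_i$ both reaches $z$ and is reached from $z$ along $P_i$, so together with strong connectivity of $D_{i-1}$ this gives strong connectivity of $D_i$. If $P_i$ is a path with end-vertices $x,y\in V(D_{i-1})$ and all internal vertices new, then each internal vertex lies on the $x$--$y$ subpath, hence is reachable from $x\in V(D_{i-1})$ and reaches $y\in V(D_{i-1})$; again $D_i$ is strong. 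Taking $i=t$ yields that $D=D_t$ is strong.

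For the \emph{necessity} direction, suppose $D$ is strong, fix a vertex $v$, and fix any cycle $C$ through $v$; such a cycle exists because for any $w\neq v$ we may concatenate a $(v,w)$-path with a $(w,v)$-path and extract from the resulting closed walk a cycle through $v$. Set $P_0:=C$. Now I would construct the ears greedily: given $D_{i-1}$ with $D_{i-1}\neq D$, distinguish two cases. If $V(D_{i-1})\neq V(D)$, strong connectivity provides an arc $a=xy$ with $x\in V(D_{i-1})$, $y\notin V(D_{i-1})$, and also a path from $y$ back into $V(D_{i-1})$; choosing such a path $Q$ of minimum length, every vertex of $Q$ except its last endpoint lies outside $V(D_{i-1})$, so $P_i:=a\cup Q$ is either a path with both ends in $V(D_{i-1})$ and all internal vertices new, or (when $Q$ returns to $x$ itself) a cycle meeting $V(D_{i-1})$ only at $x$; in either case no arc of $P_i$ belongs to $D_{i-1}$, hence to any earlier $P_j$. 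If instead $V(D_{i-1})=V(D)$ but $A(D_{i-1})\neq A(D)$, pick any arc $a\in A(D)\setminus A(D_{i-1})$ and take $P_i:=a$, a one-arc path whose (distinct) end-vertices lie in $V(D_{i-1})$. Each step strictly enlarges the arc set, so after finitely many steps $D_t=D$, giving an ear decomposition with starting cycle exactly $C$; this proves the ``if'' part and the ``furthermore'' part simultaneously.

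The hard part will be the bookkeeping in the necessity direction: for the ear $P_i=a\cup Q$ one must verify all of properties (a)--(c) at once — arc-disjointness from $D_{i-1}$ (which holds because every arc of $Q$, and $a$ itself, has an endpoint outside $V(D_{i-1})$), that the internal vertices of $P_i$ are precisely the new vertices (from minimality of $Q$), and the correct handling of the path-versus-cycle dichotomy according to whether $Q$ returns to $x$ or to another vertex of $D_{i-1}$. The auxiliary facts used — that every vertex of a strong digraph lies on a cycle, and that a strong digraph has an arc leaving every proper nonempty vertex subset — are immediate from the definition of strong connectivity and need only a line each.
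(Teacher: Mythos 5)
The paper does not prove this statement; it is quoted verbatim as Theorem 7.2.2 of \cite{BG1}, so there is no in-paper argument to compare against. Your proposal is the standard proof of the ear-decomposition theorem and is correct: the inductive sufficiency argument and the greedy construction of ears (an out-arc from $V(D_{i-1})$ followed by a shortest return path, with the single-arc case once all vertices are covered) both check out, including the arc-disjointness and path-versus-cycle bookkeeping and the derivation of the ``furthermore'' clause by starting from the prescribed cycle $C$.
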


Let $e$ be an edge and $G_0$ be the graph containing $e$ only.
Join the end-vertices of $e$ by an odd path $P_1$ we obtain a
graph $G_1$. Now if $G_{i-1}=e+P_1+\ldots+P_{i-1}$ has already
been constructed, join any two vertices in different color classes
of $G_{i-1}$ by an odd path $P_i$ having no other vertices in
common with $G_{i-1}$ we obtain $G_i$. The decomposition
$G_r=e+P_1+\ldots+P_r$ is called a \emph{bipartite ear
decomposition} of $G_r$.
\begin{theorem} (\cite{LP1}, Theorem 4.1.6)
A bipartite graph is 1-extendable if and only if it has a
bipartite ear decomposition. Such an ear decomposition may be
started with any edge $e$ of $G$.
\end{theorem}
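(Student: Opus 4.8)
The plan is to prove the theorem by exploiting the correspondence $D = D(G,M)$ together with the already-established equivalence (Theorem~\ref{theorem:CeqE}) between $1$-extendability of $G$ and strong connectivity of $D$, and then transporting the two known decomposition theorems across the correspondence. The key structural observation is that the bijection between $V(D)$ and $M$, and between $A(D)$ and $E(G)\setminus M$, sends a cycle $C$ of $D$ on $j$ vertices to an $M$-alternating cycle of $G$ on $2j$ vertices (an ``odd path'' joined up through two matching edges), and sends a path of $D$ from $x$ to $y$ to an $M$-alternating path of $G$ whose ends are the two matched partners that lie in different colour classes. Under this dictionary, an ear (a path $P_i$ of $D$ with both ends in $V(D_{i-1})$ and no internal vertex there) corresponds exactly to an odd path $P_i'$ in $G$ joining two vertices in different colour classes of the subgraph built so far and otherwise internally disjoint from it --- which is precisely the move in the definition of a bipartite ear decomposition. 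A cycle-type ear of $D$ attached at a single old vertex $v$ corresponds to an odd path in $G$ whose two ends are the two endpoints of the matching edge $e_v$, i.e. an ear attached at those two vertices, so this case is also covered.

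First I would record this correspondence precisely as a lemma: given $D = D(G,M)$, a subdigraph $D'$ of $D$ on vertex set $S\subseteq V(D)$ corresponds to an $M$-alternating subgraph $G'$ of $G$ whose matching-edge set is exactly the image of $S$, and $D'$ is an ear decomposition stage $D_i$ iff $G'$ is a bipartite ear decomposition stage $G_i$; in particular $V(G') = V(\bigcup e_v : v\in S)$ and each colour class of $G'$ has size $|S|$. I would check that orienting all edges of $G$ toward one side and contracting $M$ turns each $G_i$ into the corresponding $D_i$, so the two ``growth'' processes are literally conjugate. The base case needs a small remark: the starting cycle $P_0$ of an ear decomposition of $D$ --- which by the refined form of the digraph ear decomposition theorem may be taken through any prescribed vertex $v$ --- corresponds to an $M$-alternating cycle through the matching edge $e_v$, and such a cycle, as a subgraph of $G$, is itself a bipartite ear decomposition $e_v + P_1$ (one ear). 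Thus choosing $P_0$ through the vertex corresponding to the prescribed edge $e$ of $G$ gives an ear decomposition started with $e$.

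With the lemma in hand the proof is short. For the forward direction, suppose $G$ is $1$-extendable; by Theorem~\ref{theorem:CeqE} $D$ is strong, hence has an ear decomposition $\{P_0,\dots,P_t\}$, and moreover one with $P_0$ a cycle through the vertex corresponding to any chosen edge $e$; translating each $P_i$ through the dictionary yields a bipartite ear decomposition of $G$ started with $e$. For the converse, if $G$ has a bipartite ear decomposition, translate it back to an ear decomposition of $D$, so $D$ is strong, so $G$ is $1$-extendable by Theorem~\ref{theorem:CeqE}. One could alternatively bypass the digraph theorem and give a direct proof by induction on the number of ears using Theorem~\ref{theorem:lp1}, but routing through $D$ is cleaner given what is already available.

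The main obstacle --- really the only point requiring care --- is verifying that the two ``ear addition'' operations correspond on the nose, including the degenerate cycle-type ear and the colour-class bookkeeping (that adding an odd path between two vertices in different colour classes of $G_{i-1}$ matches adding a path between two old vertices of $D_{i-1}$, with the new internal vertices of the $G$-path pairing up into new matching edges that become the new vertices of $D$). One has to be slightly attentive to the fact that an ear of $D$ with $j$ internal vertices produces an odd path in $G$ with $2j+1$ edges, and that the endpoints of that path lie in different colour classes precisely because the two end-arcs of the $D$-ear leave $D_{i-1}$ at one end and enter $D_{i-1}$ at the other --- which under the ``orient toward one side'' picture forces the two $G$-endpoints onto opposite sides. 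Once this translation is pinned down, both implications are immediate from Theorem~\ref{theorem:CeqE} and the digraph ear decomposition theorem, so the whole argument is essentially a dictionary check plus two one-line deductions.
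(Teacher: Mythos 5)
Your proposal is correct, but note that the paper does not actually prove this statement: it is quoted from \cite{LP1} (Theorem 4.1.6) as a known result, and the paper merely remarks afterwards that a bipartite ear decomposition of $G$ corresponds to an ear decomposition of $D=D(G,M)$ for the canonical matching $M$ determined by the decomposition. Your argument fills in exactly that remark and then runs it in both directions through Theorem~\ref{theorem:CeqE} (really only the $k=1$ case, Theorem~\ref{theorem:lp1}) and the digraph ear decomposition theorem, so it is a legitimate derivation of the cited theorem from the other results the paper already has, rather than an independent proof in the style of \cite{LP1} (which argues directly on alternating paths). The dictionary you set up is right: a path ear of $D$ from $x$ to $y$ with $m$ internal vertices becomes an $M$-alternating path of length $2m+1$ from $u_x\in U$ to $w_y\in W$, a cycle ear attached at $v$ becomes an odd path joining the two ends of the matching edge $e_v$, and conversely. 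The one point you should make explicit is the ``started with any edge $e$'' clause: your phrase ``the vertex corresponding to the prescribed edge $e$'' presumes $e\in M$. Either choose $M$ to contain $e$ at the outset (possible precisely because $G$ is $1$-extendable), or, if $e\notin M$, observe that $e$ corresponds to an arc $a$ of the strong digraph $D$, that $a$ lies on some cycle $C$, and that the refined digraph theorem lets $C$ serve as $P_0$; in either case $P_0$ translates to an $M$-alternating cycle through $e$, i.e.\ to the stage $e+P_1$. With that caveat addressed, both implications go through as you describe.
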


It is remarked in \cite{LP1} that, given a bipartite ear
decomposition $G=e+P_1+\ldots+P_r$ of a bipartite graph $G$, there
is exactly one perfect matching $M$ in $G$ such that $M\cap
E(G_i)$ is a perfect matching of $G_i$ for every $i$, $0\leq i
\leq r$. It is not hard to check that the given bipartite ear
decomposition corresponds to an ear decomposition of the digraph
$D=D(G,M)$.$\newline$

Next, we show two corresponding characterizations.
\begin{theorem} \label{theorem:P1}(Plummer \cite{P1})
Let $G$ be a connected bipartite graph with bipartition $(U,W)$,
$k$ a positive integer such that $k\leq (|V(G)|-2)/2$. Then $G$ is
$k$-extendable if and only if $|U|=|W|$ and for all non-empty
subset $X$ of $U$ with $|X|\leq |U|-k$, $|N(X)|\geq |X|+k$.
\end{theorem}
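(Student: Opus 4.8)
The plan is to exploit the correspondence between bipartite graphs with a perfect matching and digraphs together with Theorem \ref{theorem:CeqE}, so that Plummer's condition becomes a statement about separators in the digraph $D = D(G,M)$ for a suitable perfect matching $M$. First I would dispense with the necessity of $|U|=|W|$: a $k$-extendable graph has a perfect matching (take any matching of size $k$ and extend it, or note $k \le (|V(G)|-2)/2$ forces a matching of size $k$ to exist), so the two color classes have equal size; fix such a perfect matching $M = \{u_iw_i : 1 \le i \le n\}$ and form $D = D(G,M)$ on vertex set $\{v_1,\dots,v_n\}$, where $v_i$ corresponds to the edge $u_iw_i$.

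Next I would translate the neighbourhood condition. For a subset $X \subseteq U$, write $X = \{u_i : i \in I\}$ and set $S = \{v_i : i \in I\}$. The key combinatorial observation is that $N_G(X)$ decomposes as the $W$-vertices matched into $X$ (namely $\{w_i : i \in I\}$, contributing exactly $|X|$) plus the $W$-vertices reachable from $X$ by a non-matching edge, i.e. $\{w_j : a_{ij} = 1 \text{ for some } i \in I,\ j \notin I\}$; the latter set is precisely the set of out-neighbours of $S$ lying outside $S$ in $D$. Hence $|N_G(X)| \ge |X| + k$ for all non-empty $X$ with $|X| \le |U| - k$ is equivalent to: every non-empty proper vertex subset $S$ of $D$ with $|S| \le n-k$ has at least $k$ out-neighbours outside $S$. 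By the Menger-type characterization (Theorem \ref{theorem:_Menger}) and the standard fan/separator form of Menger's theorem, this "expansion of out-neighbourhoods" condition is exactly equivalent to $D$ having no separator of order less than $k$, i.e. to $D$ being $k$-strong. (A clean way to see the separator reformulation: a set $T$ of size $<k$ separates $D$ iff some non-empty $S \subseteq V(D)\setminus T$ with $S \ne V(D)\setminus T$ has all its out-neighbours inside $S \cup T$; optimizing over such configurations gives the bound $|N^+(S)\setminus S| \ge k$, and the size constraint $|S| \le n-k$ is what one needs so that the complement $V(D)\setminus(S\cup T)$ is non-empty.) Combining this with Theorem \ref{theorem:CeqE}, $D$ is $k$-strong iff $G$ is $k$-extendable, which finishes the equivalence.

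I expect the main obstacle to be the bookkeeping in the separator reformulation — carefully matching the quantifier "$|X| \le |U|-k$, $X$ non-empty" on the graph side with "$1 \le |S| \le n-k$, $S$ a proper subset" on the digraph side, and checking that a deficient set $X$ with $|N(X)| < |X|+k$ really does yield a separator of order $<k$ (one takes $T = N^+(S)\setminus S$ and must verify $V(D)\setminus(S\cup T) \ne \emptyset$, which is where the hypothesis $k \le (|V(G)|-2)/2$, i.e. $n \ge k+1$, and the bound on $|X|$ enter) and conversely. The rest — the necessity of balanced color classes and the dictionary between non-matching edges of $G$ and arcs of $D$ — is routine given the setup already established in Section 1. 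One should also remark that the statement is independent of the choice of $M$, since $k$-extendability of $G$ is, while $D(G,M)$ need not be.
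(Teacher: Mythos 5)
First, note that the paper does not prove this statement at all: Theorem \ref{theorem:P1} is quoted from Plummer \cite{P1} as a known result (the paper only remarks, after Theorem \ref{theorem:FJ1}, on the analogy between Plummer's condition and the condition $|N^+(X)|\geq k$ for $|X|\leq |V(D)|-k$). So there is no internal proof to compare against. Your strategy --- derive Plummer's theorem from Theorem \ref{theorem:CeqE} by translating the neighbourhood condition into an out-neighbourhood expansion condition on $D=D(G,M)$ --- is legitimate and not circular (the paper's proof of Theorem \ref{theorem:CeqE} uses only Theorem \ref{theorem:lp1} and Lemma \ref{lemma:weak_Menger}, not Theorem \ref{theorem:P1}), and your dictionary $|N_G(X)|=|X|+|N^+(S)\setminus S|$ is correct.

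However, there is a genuine gap exactly at the step you flag and then wave past: the implication ``$D$ has a separator $T$ with $|T|<k$ $\Rightarrow$ some nonempty $S$ with $|S|\leq n-k$ has $|N^+(S)\setminus S|<k$.'' Your candidate $S$ (the set of vertices reachable from some $x$ in $D-T$, with all out-neighbours in $S\cup T$) only satisfies $|S|\leq n-|T|-1$, which exceeds $n-k$ whenever $|T|<k-1$; even the minimal such choice, a sink strong component $C$ of $D-T$, can a priori have $|C|>n-k$ when $n<2k-1$, and then the expansion hypothesis simply does not apply to it. ``Optimizing over such configurations'' does not resolve this, because the out-neighbourhood condition is not monotone under passing to subsets in general. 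The fix requires one further idea that is absent from your sketch: take a sink component $C$ of $D-T$, observe that $N^+(C)\subseteq C\cup T$, and apply the expansion condition to a subset $S\subseteq C$ of size $\min(|C|,\,n-k)$. Then $N^+(S)\setminus S\subseteq (C\setminus S)\cup T$, and in either case one computes $|N^+(S)\setminus S|\leq |C|+|T|-|S|\leq k-1$ (using $|C|\leq n-|T|-1$ when $|S|=n-k$), the desired contradiction. This is the digraph analogue of the step in the bipartite proof where one replaces a deficient set $X$ with $|X|>|U|-k$ by a subset of size exactly $|U|-k$ whose neighbourhood is then forced to be all of $W$. The other direction (a deficient set yields a separator of order less than $k$) you handle correctly, including the check that $V(D)\setminus(S\cup T)\neq\emptyset$. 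With the missing subset argument supplied, your proof goes through.
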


Let $D$ be a digraph. Let $X$, $Y$ be disjoint non-empty proper
subsets of $V(D)$, the ordered pair $(X,Y)$ is called a
\emph{one-way pair} in $D$ if $D$ has no arc with tail in $X$ and
head in $Y$. Let $h(X,Y)=|V-X-Y|$.

\begin{theorem} \label{theorem:FJ1}(Frank and Jord\'{a}n \cite{FJ1})
A digraph $D$ is $k$-strong if and only if $h(X,Y)\geq k$ for
every one-way pair $(X,Y)$ in $D$.
\end{theorem}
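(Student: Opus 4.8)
The plan is to derive the theorem directly from the definition of $k$-strongness, via a clean dictionary between separators of small order and one-way pairs of small $h$-value. Since, by definition, $D$ is $k$-strong exactly when $|V(D)|\geq k+1$ and $D$ has no separator of order at most $k-1$, I would first record that the stated equivalence should be read with the hypothesis $|V(D)|\geq k+1$ in force: this is genuinely needed, because a complete digraph on at most $k$ vertices fails to be $k$-strong yet admits no one-way pair at all, so that the right-hand side would hold vacuously. Granting $|V(D)|\geq k+1$, it then suffices to prove the single equivalence
\[
D \text{ has a separator of order } \leq k-1
\quad\Longleftrightarrow\quad
D \text{ admits a one-way pair } (X,Y) \text{ with } h(X,Y)\leq k-1 .
\]

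First I would prove the forward implication. Let $S$ be a separator with $|S|\leq k-1$. Then $D-S$ is not strong, so $|V(D-S)|\geq 2$ and there are distinct vertices $x,y$ of $D-S$ with no $(x,y)$-path in $D-S$. Put $Y=\{v\in V(D-S): D-S \text{ contains a } (v,y)\text{-path}\}$ and $X=V(D-S)\setminus Y$; then $y\in Y$ and $x\in X$, so both are non-empty. If $D-S$ had an arc from some $u\in X$ to some $v\in Y$, then prepending it to a $(v,y)$-path would give a $(u,y)$-path, forcing $u\in Y$, a contradiction; hence there is no arc of $D-S$, and therefore no arc of $D$ (both ends lying outside $S$), from $X$ to $Y$. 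Thus $(X,Y)$ is a one-way pair with $h(X,Y)=|V(D)|-|X|-|Y|=|V(D)|-|V(D-S)|=|S|\leq k-1$.

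Conversely, let $(X,Y)$ be a one-way pair with $h(X,Y)\leq k-1$, and set $S=V(D)\setminus(X\cup Y)$, so that $|S|=h(X,Y)\leq k-1$ and $V(D-S)=X\cup Y$ has at least two vertices. Pick $x\in X$ and $y\in Y$. Any $(x,y)$-path in $D-S$ starts in $X$ and ends in $Y$, so it must use an arc from $X$ to $Y$; but $(X,Y)$ being one-way forbids such an arc in $D$, hence in $D-S$. So $D-S$ has no $(x,y)$-path, $D-S$ is not strong, and $S$ is a separator of order at most $k-1$.

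These two constructions are inverse to one another, which yields the displayed equivalence and hence the theorem. There is no deep step here; the points that need attention are essentially bookkeeping: making sure every subdigraph on which one invokes strongness has at least two vertices, carrying along the hypothesis $|V(D)|\geq k+1$, and --- the one observation that makes the dictionary tight --- noting that an arc with tail in $X$ and head in $Y$ is present in $D$ if and only if it is present in $D-S$, precisely because $X\cup Y$ is disjoint from $S$. (One could instead obtain the result from the minimum-cut form of Menger's Theorem behind Theorem~\ref{theorem:_Menger}, but that route brings in the usual adjacent-versus-nonadjacent case split, and the argument above seems more transparent.)
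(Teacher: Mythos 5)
The paper does not actually prove this statement: it is quoted from Frank and Jord\'{a}n \cite{FJ1} and used only to draw an analogy with the extendability criterion of Theorem~\ref{theorem:P1}, so there is no proof of record to compare against. Your argument is correct and is the standard elementary derivation of this reformulation of $k$-strong connectivity: the dictionary sending a separator $S$ of order at most $k-1$ to the one-way pair $(X,Y)$ with $Y$ the set of vertices that reach $y$ in $D-S$, and sending a one-way pair $(X,Y)$ back to the separator $S=V(D)\setminus(X\cup Y)$ of order $h(X,Y)$, does exactly what is needed, and the key bookkeeping point --- that an arc from $X$ to $Y$ lies in $D$ if and only if it lies in $D-S$, because $X\cup Y$ avoids $S$ --- is correctly isolated. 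Your observation that $|V(D)|\geq k+1$ must be carried as a standing hypothesis (a complete digraph on at most $k$ vertices has no one-way pair, so the right-hand side holds vacuously while the left-hand side fails) is a genuine caveat that the paper's bare statement glosses over, and you handle it correctly. The only cosmetic quibble is the closing claim that the two constructions are ``inverse to one another'': they need not be literal mutual inverses, but the equivalence only requires the two implications you have already established, so nothing is lost.
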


The condition in Theorem \ref{theorem:FJ1} is equivalent to that
$N^+(X)\geq k$ for any set $X\subseteq V(D)$ with $|X|\leq
|V(D)|-k$, which is similar to the condition in Theorem
\ref{theorem:P1}.$\newline$

The counterpart of Menger's Theorem for bipartite $k$-extendable
graphs was proved by Aldred et al. in \cite{AHLS}. The original
proof is a little involved. Now, with Theorem \ref{theorem:CeqE},
we can deduce it from Menger's Theorem straightly.
\begin{theorem} \label{theorem:A1}
Let $G$ be a bipartite graph with bipartition ($U$,$W$) and a
perfect matching. Then $G$ is $k$-extendable if and only if for
any perfect matching $M$ and for each pair of vertices $u\in U$
and $w \in W$, there are $k$ internally disjoint $M$-alternating
paths connecting $u$ and $w$, furthermore, these $k$ paths start
and end with edges in $E(G)\backslash M$.
\end{theorem}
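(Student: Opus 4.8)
\noindent\emph{Proposed proof.} The plan is to transport the whole statement across the correspondence $G\leftrightarrow D=D(G,M)$ and then quote Theorem~\ref{theorem:CeqE} together with Menger's theorem; essentially all of the work is a translation between the two kinds of combinatorial objects. Fix a perfect matching $M=\{u_iw_i:1\le i\le n\}$, so that $D=D(G,M)$ has vertices $v_1,\dots,v_n$ with $v_i$ corresponding to the edge $u_iw_i$, and has an arc $v_i\to v_j$ exactly when $u_iw_j\in E(G)$ (in particular $i\ne j$). The dictionary I would use is the following: an $M$-alternating $u_p$--$w_q$ path $P$ (with $u_p\in U$ and $w_q\in W$) whose first and last edges lie in $E(G)\setminus M$ must, by tracing the alternation, have the form $u_p,w_{i_1},u_{i_1},w_{i_2},u_{i_2},\dots,u_{i_{r-1}},w_q$ (the first edge is non-matching, and the matching edge used at each interior vertex is the unique one incident to it), and $P$ corresponds to the directed walk $v_p\to v_{i_1}\to\cdots\to v_{i_{r-1}}\to v_q$ in $D$. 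The interior vertices of $P$ correspond bijectively to the interior vertices of the walk, so $P$ is a path iff the walk is a path, and a family of such alternating paths is internally disjoint iff the corresponding family of walks is. When $u_pw_q\notin M$ these walks are $(v_p,v_q)$-paths; when $u_pw_q\in M$, so that $p=q$, they are cycles through $v_p$.

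For the forward implication, assume $G$ is $k$-extendable and let $M$ be an arbitrary perfect matching. By Theorem~\ref{theorem:CeqE}, $D=D(G,M)$ is $k$-strong, so $|V(D)|\ge k+1$. Take $u=u_p\in U$ and $w=w_q\in W$. If $u_pw_q\notin M$ then $v_p\ne v_q$, and Menger's theorem (Theorem~\ref{theorem:_Menger}) supplies $k$ internally disjoint $(v_p,v_q)$-paths in $D$, which the dictionary converts into the required $k$ paths in $G$. If $u_pw_q\in M$ then $v_p=v_q=:v$, and here I would use the fact that every vertex of a $k$-strong digraph lies on $k$ internally disjoint cycles. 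To see this, split $v$ into $v^{+}$, inheriting the out-arcs of $v$, and $v^{-}$, inheriting the in-arcs, obtaining a digraph $D^{\ast}$ in which the cycles through $v$ in $D$ are precisely the $(v^{+},v^{-})$-paths; if some $S\subseteq V(D)\setminus\{v\}$ with $|S|<k$ met every such path, then $D-S$ would still contain $v$ but carry no cycle through $v$, hence be non-strong, contradicting $k$-strongness of $D$ (here $|V(D-S)|\ge 2$ since $|V(D)|\ge k+1$), so the vertex-cut form of Menger's theorem gives $k$ internally disjoint $(v^{+},v^{-})$-paths in $D^{\ast}$, i.e.\ $k$ internally disjoint cycles through $v$; the dictionary then finishes this case.

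For the converse, assume the stated path condition and fix one perfect matching $M$ of $G$ (which exists by hypothesis); put $D=D(G,M)$. For distinct vertices $v_s,v_t$ of $D$ we have $u_sw_t\notin M$, so the hypothesis applied to $u:=u_s\in U$ and $w:=w_t\in W$ produces $k$ internally disjoint $M$-alternating $u_s$--$w_t$ paths with non-matching end-edges, which the dictionary turns into $k$ internally disjoint $(v_s,v_t)$-paths in $D$. Moreover, applying the hypothesis to a matched pair $u_pw_p\in M$ gives $k$ internally disjoint cycles through $v_p$, each of length at least $2$, whose interior vertices form a set of at least $k$ vertices distinct from $v_p$; hence $|V(D)|\ge k+1$. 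By Theorem~\ref{theorem:_Menger}, $D$ is $k$-strong, and by Theorem~\ref{theorem:CeqE}, $G$ is $k$-extendable.

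The step I expect to be the main obstacle is not deep but requires care: confirming that the dictionary is a genuine bijection in both directions and preserves internal disjointness, and in particular treating the matched-pair case $uw\in M$, where the version of Menger's theorem quoted in the paper (which speaks only of distinct $s$ and $t$) must be replaced by its consequence for cycles through a fixed vertex via the vertex-splitting argument above. Everything else reduces immediately to Theorem~\ref{theorem:CeqE} and Menger's theorem.
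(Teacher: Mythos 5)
Your proposal is correct and follows essentially the same route as the paper: translate everything into $D=D(G,M)$, invoke Theorem~\ref{theorem:CeqE} and Menger's theorem, and treat the case $uw\in M$ by an auxiliary construction yielding $k$ cycles pairwise meeting only at the corresponding vertex. The only (cosmetic) difference is in that last step, where the paper adds a twin vertex $x'$ with the same in- and out-neighbours as $x$ and applies Menger to the pair $(x,x')$, while you split $v$ into $v^{+}$ and $v^{-}$; your explicit check that $|V(D)|\geq k+1$ in the converse is a small point the paper leaves implicit.
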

\begin{proof}
Let $M$ be any perfect matching of $G$, and $D=D(G,M)$ be obtained
by orienting all edges of $G$ towards $W$ then contracting all
edges in $M$. Suppose that $G$ is $k$-extendable. Firstly we prove
the below claim.
$\newline$\textbf{Claim 1}. Let $D$ be a $k$-strong digraph and
$x$ a vertex of $D$, then $D$ contains $k$ cycles, any two of
which intersect at $x$ only.
\begin{proof}
Let $x^\prime$ be a vertex not in $V(D)$. Construct $D^\prime$
such that $V(D^\prime)=V(D)\cup \{x^\prime\}$,
$A(D^\prime)=A(D)\cup\{ux^\prime:ux\in A(D)\}\cup\{x^\prime
u:xu\in A(D)\}$. We prove that $D^\prime$ is $k$-strong. If
$D^\prime$ is not $k$-strong, then there exists a separator $S$ of
size less than $k$. If $S$ contains $x^\prime$, then $S-x^\prime$
is a separator of $D$ of size less than $k-1$, contradicting the
strong connectivity of $D$. Assume that $S$ does not contain
$x^\prime$, then any vertex $y$ which is separated from $x^\prime$
by $S$ is separated from $x$ by $S$ as well, hence $S$ is a
separator of $D$, again contradicting the strong connectivity of
$D$. Therefore $D^\prime$ is $k$-strong. By Menger's Theorem there
are $k$ internally disjoint $(x,x^\prime)$-paths in $D$. Replacing
every arc $ux^\prime$ in these paths with the arc $ux$, we obtain
the cycles as claimed.
\end{proof}

By Theorem \ref{theorem:CeqE}, $D$ is $k$-strong. If $uw\notin M$,
let $uu^\prime, w^\prime w \in M$, and $u_0, w_0 \in V(D)$ be the
vertices of $D$ corresponding to edges $uu^\prime$ and
$ww^\prime$. By Menger's Theorem there are $k$ internally disjoint
paths in $D$ from $u_0$ to $w_0$, which correspond to $k$
$M$-alternating paths in $G$ from $u^\prime$ to $w^\prime$,
starting and ending with the edges $u^\prime u$ and $ww^\prime$,
respectively. Furthermore, any two of these $M$-alternating paths
intersect at the edges $u^\prime u$ and $ww^\prime$ only. Removing
$u^\prime u$ and $ww^\prime$ from these paths we obtain $k$
internally disjoint $M$-alternating paths from $u$ to $w$ in $G$,
starting and ending with edges in $E(G)\backslash M$. If $uw\in
M$, let $v \in V(D)$ be the vertices of $D$ corresponding to $uw$.
By Claim 1 there are $k$ cycles in $D$, any two of which intersect
at $v$ only. The cycles correspond to $k$ $M$-alternating cycles
in $G$, any two of which intersect at the edge $uw$ only. Removing
$uw$ from the cycles we obtain the paths we want.

Conversely, suppose that for $M$, any vertices $u$ and $w$ in $G$,
we can always find the $M$-alternating paths as stated. Let $v_1$,
$v_2$ be any two vertices in $D$ and $u_1w_1$, $u_2w_2$ be the
edges in $M$ corresponding to $v_1$ and $v_2$, where $u_i \in U$
and $w_i \in W$, $i=1,2$. Then there are $k$ internally disjoint
$M$-alternating paths from $u_1$ to $w_2$, starting and ending
with edges in $E(G)\backslash M$. Adding edges $u_1w_1$ and
$u_2w_2$ to each of the paths, we get $k$ $M$-alternating paths,
corresponding to $k$ internally disjoint paths in $D$ from $v_1$
to $v_2$. Since $v_1$, $v_2$ is arbitrarily chosen, by Menger's
Theorem, $D$ is $k$-strong. By Theorem \ref{theorem:CeqE}, $G$ is
$k$-extendable.
\end{proof}

When considering minimal $k$-extendable bipartite graph and
minimal $k$-strong digraphs, We find the following similar
results.

\begin{theorem} \label{theorem:Mader1} (Mader \cite{M1})
Every minimal $k$-strong digraph contains at least $k$ vertices of
out-degree $k$ and at least $k$ vertices of in-degree $k$.
\end{theorem}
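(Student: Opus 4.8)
The plan is to deduce this from the one-way pair characterization of $k$-strong digraphs (Theorem~\ref{theorem:FJ1}); this is Mader's theorem, so one could of course simply invoke \cite{M1}, but here is how I would approach a proof. Observe first that in any $k$-strong digraph every vertex has out-degree and in-degree at least $k$: if a vertex $v$ had at most $k-1$ out-neighbours, those would form a separator of order less than $k$ after whose removal $v$ reaches no other vertex (using $|V(D)|\ge k+1$), and dually for in-neighbours. So the assertion is that at least $k$ vertices attain the minimum out-degree $k$ and at least $k$ attain the minimum in-degree $k$. Replacing $D$ by the digraph obtained from it by reversing every arc --- which is again minimal $k$-strong and interchanges in- and out-degrees --- it suffices to prove the statement for out-degrees.

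Next I would extract from minimality, for each arc, a witnessing set pair. Since $D$ is minimal $k$-strong, $D-a$ fails to be $k$-strong for every arc $a=(x,y)$; as $|V(D)|\ge k+1$, Theorem~\ref{theorem:FJ1} supplies a one-way pair $(X,Y)$ of $D-a$ with $h(X,Y)\le k-1$, and since $(X,Y)$ ceases to be one-way in $D$ only because of $a$, necessarily $x\in X$, $y\in Y$, and $a$ is the unique arc of $D$ with tail in $X$ and head in $Y$; moreover $h(X,Y)=k-1$, since otherwise shrinking $X$ or $Y$ by one vertex would produce a one-way pair of $D$ violating Theorem~\ref{theorem:FJ1}. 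Call such $(X,Y)$ a \emph{critical pair for $a$}. The use of these pairs is this: when $h(X,Y)=k-1$, every out-neighbour of $x$ lies in $X\cup(V(D)\setminus X\setminus Y)\cup\{y\}$, so $x$ has out-degree exactly $k$ as soon as it has no out-neighbour in $X\setminus\{x\}$. I would therefore pick, for each arc $a$, a critical pair $(X_a,Y_a)$ with $X_a$ inclusion-minimal (and then $Y_a$ inclusion-minimal), and aim to show that $X_a$ then contains a vertex of out-degree exactly $k$ --- intuitively a "source-like" vertex of the subdigraph induced on $X_a$.

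The technical heart is an uncrossing argument on these critical pairs. The quantity $h(X,Y)=|V(D)|-|X|-|Y|$ behaves well under the operations $(X_1,Y_1),(X_2,Y_2)\mapsto(X_1\cap X_2,\,Y_1\cup Y_2)$ and $(X_1\cup X_2,\,Y_1\cap Y_2)$, which again produce one-way pairs; combining this with Theorem~\ref{theorem:FJ1}, and keeping track of which arc each pair is critical for, one can force the chosen family $\{X_a\}$ to be arranged in a near-laminar fashion and locate inside each minimal member a vertex $v$ with $N^+(v)\cap(X_a\setminus\{v\})=\emptyset$, hence of out-degree $k$. Ranging over all arcs should then yield at least $k$ \emph{distinct} such vertices, and the reversal trick of the first paragraph converts this into $k$ vertices of in-degree $k$. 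I expect the main obstacle to be precisely this distinctness count: guaranteeing $k$ different minimum-out-degree vertices rather than fewer is the delicate bookkeeping that constitutes the substance of Mader's original argument. It does not seem possible to bypass it through the graph--digraph correspondence of this paper, for although $G=B(D)$ is $k$-extendable by Theorem~\ref{theorem:CeqE} and loses $k$-extendability whenever any non-matching edge is deleted, $G$ need not be minimal $k$-extendable, as the example $G_0=B(D_0)$ above shows, so the bipartite side offers no easier access to minimum-degree vertices.
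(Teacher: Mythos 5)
This theorem is stated in the paper purely as a citation to Mader's work \cite{M1}; the paper contains no proof of it, so there is nothing internal to compare your argument against. Judged on its own terms, your proposal has a genuine gap. The preliminary reductions are sound: minimum in/out-degree at least $k$ in any $k$-strong digraph, the reduction to out-degrees via the converse digraph, and the extraction, for each arc $a=(x,y)$, of a critical one-way pair $(X,Y)$ with $h(X,Y)=k-1$ and $a$ the unique $X\to Y$ arc (your justification that $h$ cannot drop below $k-1$ is correct, modulo the small case $X=\{x\}$, $Y=\{y\}$, which is handled by $|V(D)|\geq k+1$). The observation that $x$ has out-degree exactly $k$ once it has no out-neighbour in $X\setminus\{x\}$ is also correct.

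But everything after that is a promissory note rather than a proof. You assert that an uncrossing argument on the pairs $(X_1\cap X_2, Y_1\cup Y_2)$ and $(X_1\cup X_2, Y_1\cap Y_2)$ ``can force'' the family $\{X_a\}$ into near-laminar position, that each minimal member contains a vertex with no out-neighbour inside it, and that ranging over arcs ``should then yield'' $k$ distinct such vertices --- and you yourself flag the distinctness count as the open obstacle. That count is not bookkeeping to be deferred; it is the theorem. Nothing in your sketch rules out, for instance, that all the minimal sets $X_a$ coincide or nest around a single low-out-degree vertex, and the uncrossing identities alone (note also that $X_1\cap X_2$ or $Y_1\cap Y_2$ may be empty, so the two derived pairs need not both be legitimate one-way pairs) do not produce $k$ witnesses. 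Your closing remark is, however, well taken: since the converse of Theorem \ref{theorem:minimal} fails (the example $G_0=B(D_0)$), the bipartite correspondence of this paper cannot be used to derive Mader's theorem from Theorem \ref{theorem:Lou1} or vice versa, so an independent argument of the above kind really is required --- it just has not been supplied here.
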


\begin{theorem} \label{theorem:Lou1} (Lou \cite{Lou1})
Every minimal $k$-extendable bipartite graph $G$ with bipartition
($U$, $W$) has at least $2k + 2$ vertices of degree $k + 1$.
Furthermore, both $U$ and $W$ contain at least $k + 1$ vertices of
degree $k + 1$.
\end{theorem}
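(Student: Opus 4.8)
The plan is to transfer the statement to digraphs through the correspondence $G\mapsto D(G,M)$ and then read off Mader's Theorem~\ref{theorem:Mader1}. Since $G$ is $k$-extendable it has a matching of size $k$, which extends to a perfect matching $M$; set $D=D(G,M)$. By Theorem~\ref{theorem:minimal}, $D$ is minimal $k$-strong. The one fact doing the work is an exact degree dictionary: if $v\in V(D)$ is the vertex obtained by contracting the matching edge $uw\in M$ with $u\in U$ and $w\in W$, then the arcs of $D$ leaving $v$ are precisely the images of the non-matching edges of $G$ at $u$, so $d^{+}_{D}(v)=d_{G}(u)-1$, and symmetrically $d^{-}_{D}(v)=d_{G}(w)-1$. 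Because $D$ is $k$-strong, every vertex of $D$ has out- and in-degree at least $k$ (equivalently, every vertex of $G$ has degree at least $k+1$), so a vertex of $U$ has degree exactly $k+1$ iff the corresponding vertex of $D$ has out-degree exactly $k$, and a vertex of $W$ has degree exactly $k+1$ iff the corresponding vertex has in-degree exactly $k$. Moreover distinct vertices of $D$ carry distinct matching edges, hence distinct endpoints, so these correspondences are injective.

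Granting the dictionary, the theorem is a translation of Mader's. Applying Theorem~\ref{theorem:Mader1} to $D$ gives a family of vertices of $D$ of out-degree $k$ and a family of in-degree $k$; pulling the first back into $U$ and the second into $W$ produces vertices of degree $k+1$ in $U$ and in $W$, respectively. Since $U\cap W=\emptyset$ the two families are disjoint, so their cardinalities add, and one gets both the ``in each class'' assertion and the total count at once.

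The only real issue is the value of the constant. The form of Mader's theorem stated above yields just $k$ vertices of out-degree $k$ and $k$ of in-degree $k$, hence $2k$ vertices of degree $k+1$ with $k$ in each class, while Lou's theorem asks for $k+1$ in each class, and $2k+2$ in all; the task is to gain one extra vertex on each side. The clean way is to invoke the sharp form of the result in \cite{M1}, namely that a minimal $k$-strong digraph on at least $k+1$ vertices has at least $k+1$ vertices of out-degree $k$ and at least $k+1$ of in-degree $k$; the dictionary then turns this verbatim into the claim. Alternatively one can argue directly and rule out the extremal configuration: assuming for contradiction that exactly $k$ vertices of $U$ have degree $k+1$, apply Plummer's Theorem~\ref{theorem:P1} to $G-e$ for each edge $e$ incident with one of these vertices to obtain a nonempty $X\subseteq U$ with $|X|\le |U|-k$ and $|N_{G}(X)|=|X|+k$ for which $e$ is the unique edge from $X$ to a prescribed vertex of $N_{G}(X)$, and then uncross and count over these tight Hall sets to force one more degree-$(k+1)$ vertex in $U$ (and symmetrically in $W$). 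I expect the uncrossing and the bookkeeping over the tight sets to be the delicate part; the digraph reduction, by contrast, is what makes transparent why the extremal count is exactly $2k+2$ and why it splits evenly between the colour classes.
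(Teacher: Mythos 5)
First, note that the paper does not prove this statement at all: it is quoted from Lou \cite{Lou1} as a known structural result, and the authors explicitly remark, right after stating Theorems \ref{theorem:Mader1} and \ref{theorem:Lou1}, that ``neither of them implies the other.'' Your proposal is precisely an attempt to make Mader's theorem imply Lou's, and it breaks exactly where you flag it: the constant. Your dictionary is correct --- with $v\in V(D)$ the contraction of $uw\in M$, $u\in U$, $w\in W$, one has $d^{+}_{D}(v)=d_{G}(u)-1$ and $d^{-}_{D}(v)=d_{G}(w)-1$, and Theorem \ref{theorem:minimal} does guarantee that $D=D(G,M)$ is minimal $k$-strong --- but Theorem \ref{theorem:Mader1} as stated yields only $k$ vertices of out-degree $k$ and $k$ of in-degree $k$, hence only $k$ vertices of degree $k+1$ in each of $U$ and $W$, i.e.\ $2k$ in total rather than the required $2k+2$.

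Neither of your two proposed repairs closes this gap. The ``sharp form'' of Mader's theorem with constant $k+1$ is simply asserted; nothing in the paper, nor in the statement of \cite{M1} as the paper records it, supports it, and if such a form were available the paper's remark that the two theorems are incomparable would be false. The alternative direct argument via tight sets in Plummer's condition (Theorem \ref{theorem:P1}) is only a plan --- you yourself defer ``the uncrossing and the bookkeeping over the tight sets'' --- and that deferred step is where the entire content of Lou's proof actually lives. So what you have is a correct transfer of the problem to digraphs that provably delivers $2k$ of the required $2k+2$ vertices (split $k$ and $k$ between the colour classes), plus an unproven claim for the last two. As a proof of the stated theorem it has a genuine gap.
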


Neither of them implies the other but striking analogical
techniques were used in \cite{M1} and \cite{Lou1}. We cite two
corresponding structural lemmas here.

Let $h(a)$ and $t(a)$ denote the head and tail of an arc $a$,
respectively. An arc set $a_1$, $a_2$, $\ldots$, $a_m$, where $m$
is even, is call an \emph{anti-directed trail} if for all $i$,
$h(a_{2i+1})=h(a_{2i+2})$ and $t(a_{2i+2})=t(a_{2i+3})$, or for
all $i$, $t(a_{2i+1})=t(a_{2i+2})$ and $h(a_{2i+2})=h(a_{2i+3})$
(indexes modula $m$).

\begin{theorem} \label{theorem:Mader2} (Mader \cite{M1})
Let $D$ be a minimal $k$-strong digraph. Then the subgraph of $D$
induced by all arcs whose tail is of outdegree at least $k+1$ and
whose head is of indegree at least $k+1$ does not contain an
anti-directed trail.
\end{theorem}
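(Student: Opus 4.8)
\medskip
\noindent\textbf{Proof strategy.}
I would argue by contradiction, using the \emph{fragment} method for minimal $k$-strong digraphs together with the submodularity of out- and in-neighbourhoods; this is essentially Mader's own line in \cite{M1}. If $|V(D)|=k+1$ every vertex has out- and in-degree exactly $k$, so the subgraph in question is edgeless and there is nothing to prove; assume $|V(D)|\ge k+2$. Call a set $\emptyset\neq X\subsetneq V(D)$ a \emph{positive fragment} if $|N^+(X)|=k$ and $X\cup N^+(X)\neq V(D)$, and a \emph{negative fragment} if $|N^-(X)|=k$ and $X\cup N^-(X)\neq V(D)$. Two facts do the work. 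First, for all $X,Y\subseteq V(D)$ one has $|N^+(X)|+|N^+(Y)|\ge|N^+(X\cap Y)|+|N^+(X\cup Y)|$, and likewise for $N^-$ (this is standard, cf.\ \cite{FJ1}); hence if $X$ and $Y$ are positive fragments with $X\cap Y\neq\emptyset$ and $X\cup N^+(X)\cup Y\cup N^+(Y)\neq V(D)$, then both $X\cap Y$ and $X\cup Y$ are positive fragments. Second, minimality supplies, for every arc $a=uv$, a positive fragment $X_a$ with $u\in X_a$ and $v\in N^+(X_a)$: since $D-a$ is not $k$-strong there is a separator $S$ of $D-a$ with $|S|\le k-1$, so $D-S$ is strong while $D-S-a$ is not, and a one-way pair of $D-S-a$ yields a proper nonempty $X_a\subseteq V(D)\setminus S$ with $N^+(X_a)\subseteq S\cup\{v\}$, whence $N^+(X_a)=S\cup\{v\}$ and $|S|=k-1$. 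Dually there is a negative fragment $Y_a$ with $v\in Y_a$ and $u\in N^-(Y_a)$.

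Let $D^{*}$ be the subgraph of the statement, spanned by the arcs $a$ with $d^+(t(a))\ge k+1$ and $d^-(h(a))\ge k+1$ (with $d^+$, $d^-$ the out- and in-degree), and suppose for contradiction that $D^{*}$ contains an anti-directed trail $a_1,a_2,\ldots,a_m$. Reversing every arc of $D$ turns a trail of the second type into one of the first and preserves both minimal $k$-strongness and $D^{*}$, so I may take the first type: $h(a_{2i-1})=h(a_{2i})=:p_i$ and $t(a_{2i})=t(a_{2i+1})=:q_i$, indices modulo $m$. Then each $q_i$ is a tail of $a_{2i},a_{2i+1}\in D^{*}$, so $d^+(q_i)\ge k+1$, and each $p_i$ is a head of $a_{2i-1},a_{2i}\in D^{*}$, so $d^-(p_i)\ge k+1$. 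To each arc $a_j=u_jv_j$ attach the positive fragment $X_j:=X_{a_j}$, chosen inclusion-minimal among all positive fragments $X$ with $u_j\in X$ and $v_j\in N^+(X)$, and dually the inclusion-minimal negative fragment $Y_j$. The plan is to traverse the trail and uncross repeatedly: where consecutive arcs $a_{2i-1},a_{2i}$ meet at the head $p_i$, and $a_{2i},a_{2i+1}$ meet at the tail $q_i$, the associated positive and negative fragments can be intersected or united by the submodular inequality above, and because the trail is a closed cyclic object this propagation cannot continue forever. Concretely, I would aim to show that it can only terminate by exhibiting a positive fragment equal to $\{q_i\}$ for some $i$, or a negative fragment equal to $\{p_i\}$ for some $i$; either outcome forces $d^+(q_i)=k$ or $d^-(p_i)=k$, contradicting the defining condition of $D^{*}$.

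The delicate point, and the principal obstacle, is the uncrossing propagation itself. The submodular inequality only yields $|N^+(X\cap Y)|+|N^+(X\cup Y)|\le 2k$, so to keep extracting a \emph{proper} fragment one must, at each step, rule out the two degeneracies $X\cap Y=\emptyset$ and $X\cup N^+(X)\cup Y\cup N^+(Y)=V(D)$; arranging the inclusion-minimal choices of the $X_j$, $Y_j$ so that these never arise, and so that the argument closes up coherently around the cyclic anti-directed trail, is precisely Mader's bookkeeping in \cite{M1}, and I expect that to be the part that resists any real shortcut. A tempting alternative is to route through the correspondence of Section~2: an anti-directed trail of $D$ corresponds to a closed trail of $G=B(D)$ lying inside $E(G)\setminus M$, and the hypotheses $d^+(t(a))\ge k+1$, $d^-(h(a))\ge k+1$ translate into lower bounds on the $G$-degrees of the end-vertices of the corresponding edge, so that the present statement would follow from the analogous structural lemma for minimal $k$-extendable bipartite graphs. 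The obstruction is that Theorem~\ref{theorem:minimal} has no converse: a minimal $k$-strong $D$ need not have $B(D)$ minimal $k$-extendable (witness the example of Figure~\ref{fig:digraph}), so this route would first need a sharper, ``relative'' version of the bipartite lemma, which looks no easier than handling the fragments directly.
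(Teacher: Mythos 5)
This theorem is not proved in the paper at all: it is quoted verbatim from Mader \cite{M1} as background for the comparison with Theorem \ref{theorem:Lou2}, so there is no in-paper argument to measure your attempt against. Judged on its own terms, your proposal is an accurate outline of the right framework but it is not a proof. The two preliminary facts you isolate are correct and standard: submodularity of $|N^{+}(\cdot)|$ and $|N^{-}(\cdot)|$, and the existence, for each arc $a=uv$ of a minimal $k$-strong digraph, of a positive fragment $X_a$ with $u\in X_a$, $v\in N^{+}(X_a)$, $|N^{+}(X_a)|=k$ (modulo the small degeneracy you should still rule out, namely $X_a\cup N^{+}(X_a)=V(D)$ when the far side of the separation is the single vertex $v$). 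The gap is that everything after that setup is stated in the conditional: ``the plan is to traverse the trail and uncross repeatedly,'' ``I would aim to show,'' ``this propagation cannot continue forever.'' That propagation — how the minimal fragments attached to consecutive arcs of the anti-directed trail interact at the shared heads $p_i$ and shared tails $q_i$, why the two degeneracies of the uncrossing never block the argument, and why closing the trail up cyclically forces a singleton fragment — is the entire content of Mader's theorem, and you have not supplied any of it. You candidly say as much, which is honest, but it means the proposal is a statement of intent rather than an argument.

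Your remark about the alternative route through Section 2 is a genuinely useful observation and is consistent with what the paper itself says: the authors note that an anti-directed trail in $D$ corresponds to a closed trail in $G=B(D)$ but that the converse fails, and that Theorem \ref{theorem:minimal} has no converse, so neither Theorem \ref{theorem:Mader2} nor Theorem \ref{theorem:Lou2} can be derived from the other via the correspondence. You correctly identify why that shortcut is blocked. But recognizing that the detour fails does not discharge the obligation to carry out the direct fragment argument, so as it stands the proof is incomplete.
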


\begin{theorem} \label{theorem:Lou2} (Lou \cite{Lou1})
In a minimal $k$-extendable bipartite graph, the subgraph induced
by the edges both ends of which have degree at least $k$+2 is a
forest.
\end{theorem}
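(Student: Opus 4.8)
The plan is to transport the statement across the bipartite--digraph correspondence and then invoke Mader's structural lemma, Theorem~\ref{theorem:Mader2}. Write $F$ for the subgraph of $G$ induced by the edges both of whose ends have degree $\geq k+2$; we must show $F$ is acyclic. Fix a perfect matching $M$ of $G$ and put $D=D(G,M)$, which is minimal $k$-strong by Theorem~\ref{theorem:minimal}. Recall the degree dictionary built into the correspondence: if $v\in V(D)$ corresponds to the matching edge $u_iw_i$ then $d^+_D(v)=\deg_G(u_i)-1$ and $d^-_D(v)=\deg_G(w_i)-1$, while a non-matching edge $u_iw_j$ corresponds to the arc $(v_i,v_j)$. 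Hence a non-matching edge of $G$ lies in $F$ if and only if the corresponding arc of $D$ has tail of out-degree $\geq k+1$ and head of in-degree $\geq k+1$, i.e.\ if and only if it lies in the subgraph of $D$ to which Theorem~\ref{theorem:Mader2} refers. Note also, as a preliminary, that since $G$ is $k$-extendable with $k\geq 1$ it is $1$-extendable, hence connected and matching covered, hence elementary, so $G$ has no fixed edge; in particular every edge of $G$ is missed by some perfect matching.

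Suppose $F$ has a cycle $Z$, and first assume some perfect matching $M$ satisfies $E(Z)\cap M=\emptyset$. Orienting all edges of $G$ towards $W$ and contracting $M$ turns $Z$ into a closed walk of $D$, all of whose arcs lie in $F$ and hence in Mader's subgraph; reading $Z=x_0x_1\cdots x_{2\ell-1}x_0$ with the $U$-vertices in the even positions, consecutive arcs along this closed walk alternately share their head (at a $W$-vertex of $Z$) and share their tail (at a $U$-vertex of $Z$). As distinct non-matching edges of $G$ give distinct arcs of $D$, these $2\ell$ arcs are distinct, so they form an anti-directed trail of even length inside the subgraph of Theorem~\ref{theorem:Mader2}, contradicting that theorem. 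Thus no cycle of $F$ avoids a perfect matching.

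It remains to handle a cycle $Z\subseteq F$ meeting every perfect matching of $G$; this is the main obstacle. Now the image of $Z$ in $D$ is no longer an anti-directed trail: at each matching edge $u_mw_m\in E(Z)$ the two neighbouring edges of $Z$ contract to an in-arc and an out-arc of $v_m$, a \emph{directed} transit through the vertex $v_m$ (which has both in- and out-degree $\geq k+1$), so the image is a closed walk of big arcs mixing anti-directed reflections with directed transits through big vertices. I would try to remove this obstruction by perturbing $M$: for a matching edge $e=u_mw_m$ of $Z$, non-fixedness of $e$ yields an $M$-alternating cycle $C$ through $e$ (equivalently a directed cycle through $v_m$ in $D$); if $C$ can be chosen to meet $Z$ only in $e$, then $M\vartriangle E(C)$ is a perfect matching meeting $Z$ in strictly fewer edges, and iterating reduces to the case already settled. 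The hard part is producing such a $C$: in $D$ one must route a directed cycle through $v_m$ that avoids the arcs and vertices of $D$ coming from $Z$, and I expect to extract this from the $k$-strong connectivity of $D$ together with the fact that every vertex of $Z$ has $G$-degree $\geq k+2$, so that deleting the two $Z$-edges at each vertex still leaves slack to route around. Alternatively one can bypass $G$ altogether and prove directly the strengthening of Mader's lemma that forbids such mixed closed walks of big arcs in a minimal $k$-strong digraph; the correspondence would then close the proof at once, and this is in effect the digraph form of the argument of~\cite{Lou1}.
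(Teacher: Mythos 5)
The paper does not actually prove this statement --- it is quoted from \cite{Lou1} --- and the surrounding discussion explicitly warns against the route you take: immediately after Theorem~\ref{theorem:Lou2} the authors remark that an anti-directed trail in $D$ corresponds to a closed trail in $G=B(D)$, \emph{while a closed trail in $G$ does not always correspond to an anti-directed trail in $D$}. Your first case is correct and is a nice observation: the degree dictionary $d^+_D(v_i)=\deg_G(u_i)-1$, $d^-_D(v_i)=\deg_G(w_i)-1$ is right, and a cycle $Z$ of $F$ that is disjoint from some perfect matching $M$ does contract to an anti-directed trail of $2\ell$ distinct arcs lying inside Mader's subgraph of the minimal $k$-strong digraph $D(G,M)$ (Theorem~\ref{theorem:minimal}), contradicting Theorem~\ref{theorem:Mader2}.

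The second case, however, is a genuine gap, and it is exactly the obstruction the paper flags. You never establish the existence, for a matching edge $e\in M\cap E(Z)$, of an $M$-alternating cycle $C$ through $e$ with $E(C)\cap E(Z)=\{e\}$. Non-fixedness of $e$ only yields \emph{some} alternating cycle through $e$; forcing it to avoid the other edges of $Z$ amounts to finding a directed cycle through $v_m$ in $D$ that misses up to $2\ell$ prescribed arcs, and $k$-strong connectivity gives no such control (deleting even one arc can already destroy $k$-strong connectivity, and the vertices of $Z$ having degree $\geq k+2$ only gives local slack at the first step of the cycle, not along its whole length). Without this, the induction on $|M\cap E(Z)|$ never starts, and a cycle of $F$ all of whose matching edges are traversed as directed transits in $D$ produces a mixed closed walk that Theorem~\ref{theorem:Mader2} simply does not forbid. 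Your fallback --- ``prove directly the strengthening of Mader's lemma that excludes such mixed walks in a minimal $k$-strong digraph'' --- is a restatement of what has to be proved, not an argument; this is why the theorem is cited from \cite{Lou1} rather than derived from Theorem~\ref{theorem:Mader2} via the correspondence.
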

It can be verified that an anti-directed trail in $D$ corresponds
to a closed trail in $G=B(D)$, while a closed trail in $G$ does
not always corresponds to an anti-directed trail in $D$.
\section{Combinatorial Matrices}
In this section, we show the equivalence among $k$-connected
digraphs, $k$-extendable bipartite graphs and combinatorial
matrices.

\begin{theorem} (\cite {LL2}, Theorem 2.1.1) \label{theorem:Liu_irrd}
Let $A \in B_n$, then $A$ is irreducible if and only if the
associated digraph $D(A)$ is strong.
\end{theorem}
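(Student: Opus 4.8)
The plan is to route both directions of the equivalence through a single elementary fact about digraphs: a digraph $D$ on vertex set $\{1,\dots,n\}$ fails to be strong if and only if $V(D)$ contains a nonempty proper subset $S$ that is \emph{closed}, meaning no arc of $D$ has its tail in $S$ and its head in $V(D)\setminus S$. I would prove this auxiliary fact first, as it carries all the content. For the forward implication, suppose $D$ is not strong, choose distinct vertices $u,v$ admitting no $(u,v)$-path, and let $S$ be the set of vertices reachable from $u$ together with $u$ itself; then $u\in S$ and $v\notin S$, and were some arc to leave $S$ we could prolong a path from $u$ past $S$, so $S$ is closed. For the reverse implication, a one-line induction on the length of a walk shows that a walk beginning in a closed set $S$ never leaves $S$; hence no vertex of $V(D)\setminus S$ is reachable from any vertex of $S$, and $D$ is not strong.

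The second step is to translate the algebraic hypothesis into the existence of a closed set. Conjugation by a permutation matrix $P$ is nothing but relabelling the vertices of $D(A)$ by the underlying permutation $\sigma$, so that $(P^{T}AP)_{ij}=a_{\sigma(i)\sigma(j)}$ and $D(P^{T}AP)$ is isomorphic to $D(A)$ through $\sigma$. The defining block form of a reducible matrix --- with $B$ of order $l$ and an $l\times(n-l)$ zero block in the upper right corner --- says exactly that $(P^{T}AP)_{ij}=0$ whenever $i\le l<j$, i.e. that $S:=\sigma(\{1,\dots,l\})$ is a closed set of $D(A)$ with $1\le|S|\le n-1$. Therefore $A$ is reducible precisely when $D(A)$ has a nonempty proper closed set, which by the auxiliary fact occurs precisely when $D(A)$ is not strong. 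Taking contrapositives gives ``$A$ irreducible if and only if $D(A)$ is strong'', which is the statement of Theorem \ref{theorem:Liu_irrd}.

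A minor point to dispose of along the way: $A$ may carry $1$'s on its diagonal, creating loops in $D(A)$, but loops are immaterial to strong connectivity (which only concerns paths between distinct vertices) and to the block condition as well, since diagonal entries always sit inside the square blocks $B$ and $D$ and never in the off-diagonal zero block; so they can simply be ignored. There is no genuine obstacle in this argument --- it is essentially bookkeeping --- and the only spots demanding attention are the permutation--relabelling dictionary and verifying that the set $S$ produced is honestly nonempty and proper, which is secured by $u\in S$, $v\notin S$ in one direction and by the constraint $1\le l\le n-1$ in the other. The boundary case $n=1$ is consistent too: a $1\times1$ matrix is vacuously irreducible and a one-vertex digraph is vacuously strong.
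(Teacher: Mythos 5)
Your proof is correct. Note that the paper does not prove this statement at all: it is quoted from Liu and Lai (\cite{LL2}, Theorem 2.1.1) as a known result, so there is no in-paper argument to compare against. Your route --- characterizing both reducibility and failure of strong connectivity by the existence of a nonempty proper \emph{closed} vertex set (no arc leaving $S$), with the permutation-conjugation/relabelling dictionary linking the zero block to such a set --- is the standard textbook proof, and the details you flag (reachability set in the forward direction, walk-stays-in-$S$ induction in the reverse, irrelevance of diagonal entries/loops, and the $n=1$ case) are exactly the right ones; I see no gap.
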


\begin{theorem} (Brualdi et al. \cite{BPS1}) \label{theorem:Brualdi_indecomp}
Let $A\in B_n$. Then $A$ is fully indecomposable if and only if
every one entry of $A$ lies in a nonzero diagonal, and every zero
entry of $A$ lies in a diagonal with exactly one zero member.
\end{theorem}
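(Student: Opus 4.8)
\noindent\emph{Proof proposal.} The plan is to recast the two diagonal conditions as a single statement about $(n-1)\times(n-1)$ submatrices and then apply the classical K\"onig--Frobenius theorem. For $1\le i,j\le n$ let $A(i|j)$ denote the matrix obtained from $A$ by deleting row $i$ and column $j$. First I would record the elementary fact that a nonzero diagonal of $A(i|j)$, together with the entry $a_{ij}$, forms a diagonal of $A$ that is nonzero when $a_{ij}=1$ and has exactly one zero member (namely $a_{ij}$) when $a_{ij}=0$; conversely, deleting the position $(i,j)$ from any such diagonal of $A$ leaves a nonzero diagonal of $A(i|j)$. Hence the two conditions in the statement hold simultaneously if and only if $A(i|j)$ has a nonzero diagonal for every pair $(i,j)$. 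The only external ingredient is the K\"onig--Frobenius theorem: an $m\times m$ zero--one matrix has a nonzero diagonal if and only if it contains no $s\times t$ zero submatrix with $s+t>m$; I would either cite this or deduce it from K\"onig's min--max theorem for bipartite graphs, which itself follows from Menger's theorem (Theorem~\ref{theorem:_Menger}). With it, the theorem reduces to: $A$ is fully indecomposable if and only if $A(i|j)$ has no $s\times t$ zero submatrix with $s+t=n$ for every $(i,j)$.

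For the ``only if'' direction I would argue by contraposition. If $A$ is partly decomposable, then after the permitted row and column permutations there are index sets $R,C$ with $A[R,C]=0$, $|R|=p$, $|C|=q$, $p+q=n$ and $1\le p,q\le n-1$. Since $|\overline{R}|=q\ge1$ and $|\overline{C}|=p\ge1$, I can pick $i\notin R$ and $j\notin C$; then $R$ is still a set of rows and $C$ a set of columns of $A(i|j)$, so $A(i|j)$ contains the zero submatrix $A[R,C]$, whose dimensions satisfy $p+q=n=(n-1)+1$. By K\"onig--Frobenius $A(i|j)$ has no nonzero diagonal, so by the reformulation either $a_{ij}=1$ lies on no nonzero diagonal of $A$ or $a_{ij}=0$ lies on no diagonal of $A$ with a single zero member; in either case a stated condition fails. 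The one point needing care is to delete a row and a column \emph{outside} the zero block: deleting a row in $R$ or a column in $C$ would leave in $A(i|j)$ only a zero submatrix whose row-count plus column-count is at most $n-1$, which does not force the absence of a nonzero diagonal.

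For the ``if'' direction, suppose a stated condition fails; by the reformulation there is a pair $(i,j)$ with $A(i|j)$ having no nonzero diagonal, so by K\"onig--Frobenius $A(i|j)$ contains an $s\times t$ zero submatrix with $s+t=n$. Since $s,t\le n-1$ automatically and $s=n-t\ge1$, this is an $s\times(n-s)$ zero submatrix of $A$ with $1\le s\le n-1$, so $A$ is partly decomposable. Combining the two directions completes the proof. I do not expect a serious obstacle: the real content is the reformulation via $A(i|j)$ and the classical K\"onig--Frobenius theorem, plus the small combinatorial subtlety about where to remove the row and column. A self-contained alternative within the paper's framework would be to first prove that $A$ is fully indecomposable exactly when $B(A)$ is $1$-extendable, and then push the same reformulation through Plummer's characterization (Theorem~\ref{theorem:P1}) together with Hall's theorem.
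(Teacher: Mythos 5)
The paper gives no proof of this statement: it is quoted from Brualdi, Parter and Schneider \cite{BPS1}, and the text immediately passes to the graph-theoretic reading (a nonzero diagonal is a perfect matching of $B(A)$, and the two diagonal conditions say precisely that $B(A)$ is $1$-extendable). Your argument is therefore necessarily a different route from anything in the paper, and it is correct. The reformulation --- the two conditions hold simultaneously if and only if $A(i|j)$ has a nonzero diagonal for every pair $(i,j)$ --- is right (when $a_{ij}=0$ the unique zero member of the witnessing diagonal must be $a_{ij}$ itself, so deleting it does leave an all-ones diagonal of $A(i|j)$), and the care you take to delete a row and a column \emph{outside} the zero block in the ``only if'' direction is exactly the one place the argument could have gone wrong. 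One cosmetic point in the ``if'' direction: K\"onig--Frobenius hands you an $s\times t$ zero submatrix of $A(i|j)$ with $s+t\ge n$, not $s+t=n$ on the nose; since $s,t\le n-1$ you can discard rows or columns to reach $s+t=n$ with both sides nonempty, so nothing breaks, but the sentence should say so. Your closing alternative --- first show $A$ is fully indecomposable iff $B(A)$ is $1$-extendable and then apply Theorem~\ref{theorem:P1} with $k=1$ --- is in effect the $k=0$ case of the paper's own Theorem~\ref{theorem:Ind=Ext}, whose proof runs the same Hall-type counting of zero submatrices against neighbourhoods; so your main proof is the more self-contained of the two, while the alternative is the one that integrates with the paper's machinery.
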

A nonzero diagonal of $A$ corresponds to a perfect matching of the
reduced associated bipartite graph $B(A)$. The condition in
Theorem \ref{theorem:Brualdi_indecomp} is equivalent to that
$B(A)$ is 1-extendable.

\begin{theorem} (\cite{LL2}, Theorem 2.1.3) \label{theorem:Liu_irrd_indecomp}
Let $A\in B_n$, Then

(1) If $A$ is fully indecomposable, then $A$ is irreducible.

(2) A is irreducible if and only if $A+I$ is fully indecomposable.
\end{theorem}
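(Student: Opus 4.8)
The plan is to read both parts off the correspondence between Boolean matrices, bipartite graphs with a perfect matching, and digraphs that has just been set up: part~(1) is a direct unwinding of the definitions, and part~(2) is a concatenation of the three equivalences already available (Theorems \ref{theorem:lp1}, \ref{theorem:Liu_irrd} and \ref{theorem:Brualdi_indecomp} together with the remark following the latter). Throughout I would assume $n\ge 2$; for $n=1$ all four assertions hold vacuously.

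For part~(1) I would first record the elementary observation that ``partly decomposable'' coincides with ``$1$-partly decomposable'': a matrix is partly decomposable iff it has an $l\times(n-l)$ zero submatrix for some $1\le l\le n-1$ (one direction reads the zero block off $PAQ$, the other permutes the given zero submatrix into the upper-right corner). Now suppose $A$ is reducible, say $P^{T}AP=\left[\begin{matrix} B & 0 \cr C & D \end{matrix}\right]$ with $B$ of order $l$; its zero block occupies rows $1,\dots,l$ and columns $l+1,\dots,n$, and since $P^{T}AP$ arises from $A$ by the same permutation of rows and of columns, $A$ itself contains an $l\times(n-l)$ zero submatrix. Hence $A$ is partly decomposable, and contrapositively full indecomposability implies irreducibility. (Once part~(2) is proved, part~(1) is also immediate: if $A$ is fully indecomposable then so is $A+I$, because $A+I\ge A$ entrywise over the Boolean algebra, so a zero submatrix of $A+I$ is one of $A$; then $A$ is irreducible by~(2).)

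For part~(2), note that $A+I$ has an all-one main diagonal, so by the remark after Theorem \ref{theorem:Brualdi_indecomp} the matrix $A+I$ is fully indecomposable iff the bipartite graph $G=B(A+I)$ is $1$-extendable. Let $M$ be the perfect matching of $G$ corresponding to that main diagonal; Theorem \ref{theorem:lp1} then says $G$ is $1$-extendable iff $D(G,M)$ is strong. Finally $D(G,M)=D\big((A+I)-I\big)$ is exactly $D(A)$ with all loops deleted, and deleting loops changes neither the paths nor the strong connectivity, so $D(G,M)$ is strong iff $D(A)$ is strong, which by Theorem \ref{theorem:Liu_irrd} happens iff $A$ is irreducible. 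Concatenating these equivalences gives~(2).

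I expect no genuine mathematical difficulty here; the points that need care are all bookkeeping: identifying the paper's two notions of decomposability at the value $k=1$; handling the loop/no-loop discrepancy between $D(A)$ (which carries a loop at $i$ exactly when $a_{ii}=1$) and $D\big((A+I)-I\big)$; and checking that $G=B(A+I)$ is connected (a fully indecomposable matrix cannot become block-diagonal after permutations) so that Theorem \ref{theorem:lp1} and the Brualdi remark genuinely apply. A purely matrix-theoretic route to~(2) is equally short and sidesteps these identifications: for any matrix with positive main diagonal an $l\times(n-l)$ zero submatrix is forced to use complementary row and column index sets, so ``partly decomposable'' and ``reducible'' coincide for $A+I$; and $A+I$ is reducible iff $A$ is, since $P^{T}(A+I)P=P^{T}AP+I$ preserves the block-triangular shape in both directions.
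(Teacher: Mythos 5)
The paper does not prove this statement at all: it is quoted verbatim from \cite{LL2} (Theorem 2.1.3) and used as a known result, so there is no internal proof to compare yours against. Judged on its own, your argument is correct. Part (1) is indeed immediate once one observes that reducibility ($P^{T}AP$ in block form) is the special case of partial decomposability in which the row and column permutations coincide, or equivalently that a reducible matrix contains an $l\times(n-l)$ zero submatrix. For part (2), both of your routes work: the concatenation $A+I$ fully indecomposable $\Leftrightarrow$ $B(A+I)$ $1$-extendable $\Leftrightarrow$ $D(G,M)$ strong $\Leftrightarrow$ $D(A)$ strong $\Leftrightarrow$ $A$ irreducible correctly chains Theorem \ref{theorem:Brualdi_indecomp} (with the remark following it), Theorem \ref{theorem:lp1}, and Theorem \ref{theorem:Liu_irrd}, and you rightly flag the two bookkeeping points (loops on the diagonal of $D(A)$ versus $D((A+I)-I)$, and connectedness of $B(A+I)$ so that the definition of $1$-extendability applies). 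The purely matrix-theoretic alternative --- that a positive main diagonal forces any $l\times(n-l)$ zero submatrix to sit on complementary index sets, so partial decomposability and reducibility coincide for $A+I$, and that $P^{T}(A+I)P=P^{T}AP+I$ makes reducibility of $A$ and of $A+I$ equivalent --- is the cleaner and more self-contained of the two, since it avoids invoking the graph correspondence entirely; it is essentially the standard textbook proof. In the context of this paper, your first route has the pedagogical merit of exhibiting Theorem \ref{theorem:Liu_irrd_indecomp} as the $k=1$ instance of the same mechanism that drives Theorem \ref{theorem:inde_irre}, whose proof in the paper follows exactly this pattern via Theorem \ref{theorem:You:irrd_str} and Lemma \ref{theorem:You:inde_str}.
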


The followings are generalized results for $k$-indecomposable
matrices and $k$-irreducible matrices.

\begin{theorem} \label{theorem:You:irrd_str} (You et al. \cite{YLS1})
Suppose $k\geq 1$. Then a matrix $A\in B_n$ is $k$-irreducible if
and only if $D(A)$ is $k$-strong.
\end{theorem}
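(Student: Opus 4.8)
The plan is to prove Theorem \ref{theorem:You:irrd_str} by reducing it, via the correspondence established in Section 2, to Theorem \ref{theorem:CeqE} together with the dictionary between fully indecomposable / partly decomposable matrices and extendable bipartite graphs. The point is that $k$-irreducibility of $A$ should correspond to $k$-partial indecomposability (equivalently, $k$-extendability) of the bipartite graph $B(A+I)$, while $k$-strong connectivity of $D(A)$ corresponds, through $G=B(D(A))=B(M(D(A))+I)=B(A+I)$, to the same extendability statement. So the heart of the matter is a careful matrix-level translation of the combinatorial condition $P^T A P = \bigl[\begin{smallmatrix} A_{11} & A_{12} & 0 \\ A_{21} & A_{22} & A_{23}\end{smallmatrix}\bigr]$ into a zero-submatrix condition for $A+I$.

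First I would make precise the following equivalence at the level of definitions: a matrix $A\in B_n$ is $k$-reducible if and only if $A+I$ (which has a positive main diagonal, hence $B(A+I)$ has a perfect matching $M$ corresponding to that diagonal) contains, after simultaneous row/column permutations fixing that diagonal, an $l\times(n-k+1-l)$ zero submatrix whose row index set and column index set are disjoint — i.e. that $A+I$ is ``$k$-partly decomposable relative to its main diagonal''. The subtlety is that the definition of $k$-reducible uses a \emph{single} permutation $P$ applied on both sides (so that the diagonal blocks stay square and the $I$ is preserved), whereas $k$-partial decomposability a priori allows independent row and column permutations $P$, $Q$. I would argue that for a matrix with a positive diagonal, the relevant zero submatrix can always be taken to have disjoint row/column support, because any zero entry on the main diagonal is forbidden ($A+I$ has ones there), so a $p\times q$ all-zero submatrix of $A+I$ uses $p$ rows and $q$ columns with no common index; then a single permutation can simultaneously move those rows to the top-left block and those columns to the far right, yielding exactly the block shape in the definition of $k$-reducible with $l=p$ and $n-k+1-l=q$, i.e. $p+q=n-k+1$.

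With that translation in hand, the proof assembles quickly. By Theorem \ref{theorem:CeqE}, $G=B(A+I)$ is $k$-extendable if and only if $D(G,M)$ is $k$-strong, where $M$ is the matching given by the main diagonal of $A+I$; and by the construction recalled in Section 2, $D(G,M)=D((A+I)-I)=D(A)$. On the other hand, $G=B(A+I)$ is $k$-extendable if and only if $A+I$ is $k$-indecomposable (this is the $k$-analogue of the remark following Theorem \ref{theorem:Brualdi_indecomp} that $B(A)$ is $1$-extendable iff $A$ is fully indecomposable; one should check, or cite, that Plummer's condition in Theorem \ref{theorem:P1} on $B(A+I)$ is literally the statement that $A+I$ has no $l\times(n-k+1-l)$ zero submatrix — $N(X)$ in the bipartite graph is exactly the set of columns meeting a nonzero entry in the rows $X$). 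Finally, by the row/column-support argument above, $A+I$ is $k$-indecomposable if and only if $A$ is $k$-irreducible. Chaining these equivalences gives $A$ is $k$-irreducible $\iff$ $A+I$ is $k$-indecomposable $\iff$ $B(A+I)$ is $k$-extendable $\iff$ $D(A)$ is $k$-strong, which is the theorem. (The base case $k=1$ is exactly Theorem \ref{theorem:Liu_irrd} combined with part (2) of Theorem \ref{theorem:Liu_irrd_indecomp}, providing a useful consistency check.)

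The main obstacle, and the step I would write out most carefully, is the combinatorial bookkeeping in the second paragraph: verifying that the ``one permutation on both sides'' normal form of $k$-reducibility for $A$ matches the ``independent permutations, disjoint support'' zero-submatrix condition for $A+I$, including tracking the index arithmetic $l$ versus $n-k+1-l$ and confirming that the blocks $A_{11}$ and $[A_{22}\ A_{23}]$ being square of order $\geq 1$ corresponds exactly to $1\leq l\leq n-1$ in the zero-submatrix formulation. A secondary point requiring a line of justification is that adding $I$ cannot create a new small zero submatrix with disjoint support relative to what $A$ already has and cannot destroy one either, so that the $k$-partial decomposability is genuinely an invariant shared by $A$ (as a reducibility notion, with the diagonal-preserving permutation) and $A+I$ (as an indecomposability notion).
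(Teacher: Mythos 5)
The paper offers no proof of Theorem \ref{theorem:You:irrd_str}; it is imported as a known result of You, Liu and Shen \cite{YLS1}, so there is no in-paper argument to measure yours against. Your derivation is, however, correct, and it is a genuine self-contained alternative to citing the source: the chain $A$ is $k$-irreducible $\Leftrightarrow$ $A+I$ is $k$-indecomposable $\Leftrightarrow$ $B(A+I)$ is $k$-extendable $\Leftrightarrow$ $D(A)$ is $k$-strong closes correctly, with the middle two equivalences supplied by Theorem \ref{theorem:Ind=Ext} (via Plummer's criterion, Theorem \ref{theorem:P1}) and Theorem \ref{theorem:CeqE}. The step you rightly single out as the crux --- that for a matrix with positive main diagonal any $l\times(n-k+1-l)$ zero submatrix must have disjoint row and column index sets, so the one-permutation block normal form defining $k$-reducibility of $A$ and the two-permutation zero-submatrix condition defining $k$-partial decomposability of $A+I$ coincide --- is exactly the right observation, and both directions of it go through (disjointness is forced because $(A+I)_{ii}=1$, and conversely disjoint supports can be ordered first/last by a single simultaneous permutation). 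It is also important, as you implicitly arrange, that you prove this translation directly from the definitions rather than quoting Theorem \ref{theorem:inde_irre}(2): the paper derives that statement \emph{from} Theorem \ref{theorem:You:irrd_str} and Lemma \ref{theorem:You:inde_str}, so citing it would be circular. Two small points to write out when polishing: (i) $D(B(A+I),M)$ is $D(A)$ with any loops removed, so add the one-line remark (made by the paper itself in the proof of Theorem \ref{theorem:inde_irre}) that loops do not affect $k$-strong connectivity; (ii) Theorem \ref{theorem:Ind=Ext} is stated for $0\leq k\leq n-1$, and indeed at $k=n$ the present theorem fails (every $A$ is vacuously $n$-irreducible while $D(A)$, having only $n$ vertices, is never $n$-strong), so the implicit restriction $k\leq n-1$ should be flagged.
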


\begin{theorem} \label{theorem:Ind=Ext}
Suppose $0 \leq k \leq n-1$ and $A\in B_n$. Then $A$ is
$k$-indecomposable if and only if $G=B(A)$ is $k$-extendable.
\end{theorem}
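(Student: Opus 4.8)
The plan is to derive Theorem~\ref{theorem:Ind=Ext} from Plummer's characterization of $k$-extendability, Theorem~\ref{theorem:P1}, by setting up a precise dictionary between zero submatrices of $A$ and neighbourhoods in $G=B(A)$. Write $(U,W)$ for the bipartition of $G$, with $U=\{u_1,\dots,u_n\}$ and $W=\{w_1,\dots,w_n\}$, so that $|U|=|W|=n$ automatically. The one observation needed is: $A$ contains an $l\times m$ zero submatrix if and only if there are sets $X\subseteq U$ and $Y\subseteq W$ with $|X|=l$, $|Y|=m$ and no edge of $G$ joining $X$ to $Y$, that is, $Y\cap N(X)=\emptyset$; and for a fixed $X$ such a $Y$ of size $m$ exists exactly when $|W\setminus N(X)|\ge m$, i.e.\ when $|N(X)|\le n-m$. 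Hence, for $1\le l\le n-1$, $A$ has an $l\times m$ zero submatrix if and only if some $X\subseteq U$ with $|X|=l$ has $|N(X)|\le n-m$.

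The heart of the argument is the case $1\le k\le n-1$. Putting $m=n-k+1-l$ in the observation above, $A$ is $k$-partly decomposable if and only if there is a non-empty $X\subseteq U$ with $|X|\le n-k$ (the bound $|X|\le n-k$ being forced by $m\ge 1$, and implying $|X|\le n-1$ since $k\ge1$) for which $|N(X)|\le n-(n-k+1-|X|)=|X|+k-1$, that is, for which $|N(X)|<|X|+k$; this is exactly the negation of the neighbourhood condition in Theorem~\ref{theorem:P1}. Since $|U|=|W|$ and $k\le n-1=(|V(G)|-2)/2$, Theorem~\ref{theorem:P1} applies whenever $G$ is connected and yields: $G$ is $k$-extendable $\iff$ Plummer's condition holds $\iff$ $A$ is not $k$-partly decomposable $\iff$ $A$ is $k$-indecomposable. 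If instead $G$ is disconnected, then $G$ is not $k$-extendable (the paper's definition restricts $k$-extendability to connected graphs), and I would check directly that $A$ is $k$-partly decomposable: the absence of edges between one component and the rest of $G$ produces a zero submatrix which, after trimming superfluous rows or columns, has the required shape $l\times(n-k+1-l)$ with $1\le l\le n-1$ (an isolated vertex already gives an all-zero row or column, which suffices because $1\le n-k\le n-1$); so the equivalence again holds, both sides being false.

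Finally, the case $k=0$, in which $k$-extendability of $G$ just means that $G$ has a perfect matching, is the classical Frobenius--K\"onig theorem once the dictionary above is applied. I do not expect a conceptual obstacle anywhere in the proof: the substance is entirely bookkeeping. The step I would flag as requiring the most care is lining up the index ranges correctly, reconciling the range $1\le l\le n-1$ in the definition of $k$-partly decomposable with the range $|X|\le|U|-k$ in Theorem~\ref{theorem:P1}, and treating the degenerate cases (disconnected $G$, and $k=0$) for which Theorem~\ref{theorem:P1} is not directly available.
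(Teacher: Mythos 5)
Your proposal is correct and follows essentially the same route as the paper: both translate an $l\times(n-k+1-l)$ zero submatrix of $A$ into a set $X\subseteq U$ with $|N(X)|\le |X|+k-1$ and then invoke Theorem~\ref{theorem:P1} in both directions. The only difference is that you explicitly treat the degenerate cases ($k=0$ and disconnected $G$) where Theorem~\ref{theorem:P1} does not directly apply, which the paper's proof passes over in silence; this is a point in your favour but not a different method.
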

\begin{proof}
Suppose that $A$ is $k$-indecomposable. Let the bipartition of $G$
be $(U,W)$. Let $U_1$ be a subset of $U$ such that $|U_1|\leq
n-k$. If $|N(U_1)|\leq |U_1|+k-1$, then $|W\backslash N(U_1)|\geq
n-|U_1|-k+1$, and the submatrix of $A$ indexed by $U_1$ and
$W\backslash N(U_1)$ is a zero matrix of size at least
$|U_1|\times (n-k+1-|U_1|)$. By definition, $A$ is $k$-partly
decomposable, a contradiction. Hence $|N(U_1)|\geq |U_1|+k$. By
Theorem \ref{theorem:P1}, $G$ is $k$-extendable.

Conversely, suppose that $G$ is $k$-extendable. If $A$ is
$k$-partly decomposable then $A$ has an $l\times (n-k+1-l)$ zero
submatrix, for some $1\leq l \leq n-k$. Let the subset of $V(G)$
indexing the row of the submatrix be $U_1$, then $|U_1|=l \leq
n-k$ and $|N(U_1)|\leq n-(n-k+1-l)=l+k-1=|U_1|+k-1$, contradicting
Theorem \ref{theorem:P1}.
\end{proof}

\begin{lemma} \label{theorem:You:inde_str}(You et al. \cite{YLS1})
Suppose $k\geq 1$  and $A \in B_n$ has a positive main diagonal.
Then A is k-indecomposable if and only if $D(A)$ is $k$-strong.
\end{lemma}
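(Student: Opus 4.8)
The plan is to obtain Lemma \ref{theorem:You:inde_str} as a short consequence of two equivalences already at our disposal: Theorem \ref{theorem:CeqE}, relating $k$-extendibility of a bipartite graph to $k$-strong connectivity of the associated digraph, and Theorem \ref{theorem:Ind=Ext}, relating $k$-indecomposability of a matrix to $k$-extendibility of its reduced associated bipartite graph. The hypothesis that $A$ has a positive main diagonal is precisely what lets us invoke the correspondence of Section 1, since the main diagonal then supplies a canonical perfect matching of $B(A)$.

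First I would set $G = B(A)$ with bipartition $(U,W)$, $U = \{u_1,\dots,u_n\}$, $W = \{w_1,\dots,w_n\}$, with $a_{ij}=1$ iff $u_iw_j \in E(G)$, so that $R(G)=A$. Since every diagonal entry of $A$ is $1$, the set $M = \{u_iw_i : 1 \le i \le n\}$ is a perfect matching of $G$ corresponding to the main diagonal of $A$, and by the construction of Section 1 we have $D(G,M) = D(R(G)-I) = D(A-I)$. Next I would record the elementary observation that $D(A)$ is obtained from $D(A-I)$ simply by adjoining a loop at every vertex; since loops are irrelevant both to the existence of $(s,t)$-paths between distinct vertices and to whether $D-S$ is strong, $D(A)$ is $k$-strong if and only if $D(A-I)$ is $k$-strong. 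Then I would chain the equivalences: by Theorem \ref{theorem:Ind=Ext}, $A$ is $k$-indecomposable iff $G=B(A)$ is $k$-extendable; by Theorem \ref{theorem:CeqE} applied to $G$ and $M$, $G$ is $k$-extendable iff $D(G,M)=D(A-I)$ is $k$-strong; and by the preceding remark this holds iff $D(A)$ is $k$-strong. Concatenating these yields exactly the statement.

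There is no deep obstacle here, since the content lies entirely in the cited theorems, but two points need care. The first is the range of $k$: Theorem \ref{theorem:Ind=Ext} is stated for $0 \le k \le n-1$, so the chain above directly covers $1 \le k \le n-1$; for $k \ge n$ one should note that $D(A)$ has only $n$ vertices and so cannot be $k$-strong, while $A$ is $k$-indecomposable vacuously, so the equivalence must be read within the intended range $1 \le k \le n-1$. The second is mere bookkeeping: the labelling of $U$ and $W$ must be chosen so that $R(G)$ equals $A$ itself rather than a permutation of it, which is harmless since both $k$-indecomposability and $k$-strong connectivity are invariant under simultaneous row and column permutations. (One could alternatively bypass Theorem \ref{theorem:Ind=Ext}, prove directly that a matrix with positive main diagonal is $k$-indecomposable iff it is $k$-irreducible, and then quote Theorem \ref{theorem:You:irrd_str}; but that route needs a combinatorial comparison of the block shapes in the two definitions and is less transparent than the argument above.)
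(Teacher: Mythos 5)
Your derivation is correct, but note that the paper does not actually prove this lemma: it is quoted verbatim from You et al.\ \cite{YLS1} and used as a black box (in the proof of Theorem \ref{theorem:inde_irre}). What you have done is supply an internal proof from the paper's own machinery: the chain $A$ $k$-indecomposable $\Leftrightarrow$ $B(A)$ $k$-extendable (Theorem \ref{theorem:Ind=Ext}) $\Leftrightarrow$ $D(B(A),M)=D(A-I)$ $k$-strong (Theorem \ref{theorem:CeqE}, applicable because the positive main diagonal yields the perfect matching $M=\{u_iw_i\}$) $\Leftrightarrow$ $D(A)$ $k$-strong (loops are irrelevant to strong connectivity, an observation the paper itself makes in proving Theorem \ref{theorem:inde_irre}). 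I checked that neither Theorem \ref{theorem:Ind=Ext} (proved via Plummer's Theorem \ref{theorem:P1}) nor Theorem \ref{theorem:CeqE} (proved via Menger) depends on this lemma, so there is no circularity, and your caveats about the range $1\le k\le n-1$ and about fixing the labelling so that $R(B(A))=A$ are exactly the right ones to flag. What your route buys is self-containedness --- the lemma becomes a corollary of results already established rather than an external import --- at the cost of resting on Theorem \ref{theorem:CeqE}, which is the paper's main technical result; the original proof in \cite{YLS1} is of course independent of that theorem.
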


\begin{theorem} \label{theorem:inde_irre}
Let $A\in B_n$, Then

(1) If $A$ is $k$-indecomposable, then $A$ is $k$-irreducible.

(2) $A$ is $k$-irreducible if and only if $A+I$ is
$k$-indecomposable.
\end{theorem}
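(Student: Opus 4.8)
\textbf{Proof proposal for Theorem \ref{theorem:inde_irre}.}
The plan is to reduce both parts to the already-established equivalences between the combinatorial-matrix notions and the digraph connectivity notions, namely Theorem \ref{theorem:You:irrd_str}, Lemma \ref{theorem:You:inde_str}, and the correspondence $D = D(A-I)$ versus $B(A)$ built in Section~1; no new structural work on bipartite graphs or digraphs should be needed. For part~(1), I would assume $A$ is $k$-indecomposable and deduce $k$-irreducibility. Since $k$-partly decomposable is defined as containing an $l\times(n-k+1-l)$ zero submatrix with $1\le l\le n-1$ and arbitrary placement of rows and columns, while $k$-reducible requires such a zero block to sit in the upper-right corner after a \emph{symmetric} permutation $P^{T}AP$ (so the same permutation acts on rows and columns), every $k$-reducible matrix is in particular $k$-partly decomposable. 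Hence the contrapositive is immediate: if $A$ were $k$-reducible it would be $k$-partly decomposable, contradicting $k$-indecomposability. This mirrors the $k=1$ statement in Theorem \ref{theorem:Liu_irrd_indecomp}(1), and I expect the proof to be one or two lines.

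For part~(2), the proof should run through the digraph $D(A)$ exactly as in the classical case. First note that $A+I$ has a positive main diagonal, so Lemma \ref{theorem:You:inde_str} applies to it: $A+I$ is $k$-indecomposable if and only if $D(A+I)$ is $k$-strong. Next observe that adding the identity to $A$ only adds loops to the associated digraph, and loops are irrelevant to strong connectivity, so $D(A+I)$ is $k$-strong if and only if $D(A)$ is $k$-strong. (If the paper's convention forbids loops, then $D(A+I)$ is literally $D(A)$ after deleting the diagonal entries, and the equivalence is even more direct.) Finally, by Theorem \ref{theorem:You:irrd_str}, $D(A)$ is $k$-strong if and only if $A$ is $k$-irreducible. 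Chaining these three equivalences gives $A$ is $k$-irreducible $\iff$ $A+I$ is $k$-indecomposable.

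The only genuinely delicate point is the middle link — that passing from $A$ to $A+I$ does not change $k$-strongness of the associated digraph — and here one must be careful about whether the ``$k$-strong'' definition in use (``$|V(D)|\ge k+1$ and no separator of order $<k$'') is sensitive to loops. A clean way to handle this is to remark that $D(A+I)$ and $D(A)$ have the same vertex set and the same set of arcs between \emph{distinct} vertices; since separators are sets of vertices and the strong-connectivity condition only ever uses paths between distinct vertices, loops can be discarded without affecting whether any $S$ is a separator. I would state this as a one-sentence observation and then assemble the three equivalences. I do not anticipate any real obstacle; the theorem is a routine lifting of Theorem \ref{theorem:Liu_irrd_indecomp} to the $k$-parametrized setting using the tools already developed.
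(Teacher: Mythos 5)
Your proposal is correct and follows essentially the same route as the paper: part (1) via the observation that $k$-reducible implies $k$-partly decomposable, and part (2) by chaining Theorem \ref{theorem:You:irrd_str}, the loop-insensitivity of $k$-strong connectivity, and Lemma \ref{theorem:You:inde_str}. Your extra care about why loops do not affect separators is a welcome refinement of a point the paper states in one sentence.
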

\begin{proof}
By definition, if $A$ is $k$-reducible then $A$ is
$k$-decomposable. Hence if $A$ is $k$-indecomposable, $A$ is
$k$-irreducible and (1) holds.

By Theorem \ref{theorem:You:irrd_str}, $A$ is irreducible if and
only if $D(A)$ is $k$-strong. Since adding a loop to a vertex or
removing a loop from a vertex does not affect the strongly
connectivity of a digraph, $D(A)$ is $k$-strong if and only if
$D(A+I)$ is $k$-strong. By Lemma \ref{theorem:You:inde_str},
$D(A+I)$ is $k$-strong if and only if $A+I$ is $k$-indecomposable.
\end{proof}

\section{Elementary components versus strong components}

Let $G$ be a bipartite graph with a perfect matching $M$, but not
1-extendable. By Theorem \ref{theorem:CeqE}, $D=D(G,M)$ is not
strong. In this section, we consider the elementary components of
$G$ and the strong components of $D$.
\begin{lemma} \label{lemma:ele_matching}
Let $G$ be a bipartite graph with a perfect matching $M$, and
$G_1$ an elementary component of $G$, then $E(G_1)\cap M$ is a
perfect matching of $G_1$.
\end{lemma}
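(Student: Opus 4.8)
The plan is to show that the given perfect matching $M$, once restricted to $G_1$, already saturates every vertex of $G_1$. Recall that $G_1$ is a connected component of the subgraph $H$ of $G$ formed by all non-fixed edges of $G$; in particular, every vertex of $G_1$ is an endpoint of at least one non-fixed edge of $G$. Since $M$ is a perfect matching of $G$, each vertex $v\in V(G_1)$ is covered by exactly one edge $f_v\in M$. The whole argument then reduces to a single claim: that $f_v$ is a non-fixed edge of $G$.

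To prove the claim, first observe that $f_v$ cannot be a fixed single edge, since $f_v$ lies in the perfect matching $M$. Suppose, for contradiction, that $f_v$ is a fixed double edge, so $f_v$ belongs to every perfect matching of $G$. By hypothesis $v$ is incident with some non-fixed edge $e$, and by the definition of ``non-fixed'' there is a perfect matching $M'$ of $G$ with $e\in M'$. But then $M'$ also contains $f_v$ (being a fixed double edge), so $e$ and $f_v$ are two distinct edges of $M'$ both incident with $v$, contradicting that $M'$ is a matching. Hence $f_v$ is non-fixed.

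It follows that $f_v\in E(H)$. Since $f_v$ is incident with the vertex $v\in V(G_1)$ and $G_1$ is a component of $H$, both endpoints of $f_v$ lie in $V(G_1)$ and $f_v\in E(G_1)$; thus $f_v\in E(G_1)\cap M$, and this edge covers $v$. As $v\in V(G_1)$ was arbitrary, $E(G_1)\cap M$ saturates every vertex of $G_1$. Being a subset of the matching $M$ all of whose edges have both endpoints in $G_1$, the set $E(G_1)\cap M$ is a matching of $G_1$ saturating $V(G_1)$, i.e.\ a perfect matching of $G_1$.

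The argument is essentially routine; the only point requiring care is the observation that every vertex of $G_1$ is genuinely an endpoint of a non-fixed edge (so that $H$, and hence $G_1$, has no isolated vertices), which is precisely what rules out the fixed-double case. No deeper structural properties of elementary graphs are needed for this lemma.
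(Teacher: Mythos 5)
Your proof is correct and follows essentially the same route as the paper's: both arguments hinge on the observation that an edge adjacent to a fixed double edge must be a fixed single edge (which you prove explicitly via the matching $M'$, while the paper cites it as known), and conclude that the $M$-edge at each vertex of $G_1$ is non-fixed and hence lies in $G_1$.
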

\begin{proof}
An edge $e\in E(G)\backslash E(G_1)$ incident to a vertex in $G_1$
is fixed. However it can not be a fixed double edge, since every
edge adjacent to a fixed double edge must be a fixed single edge.
Hence, all edges in $M$ saturating vertices in $V(G_1)$ must be in
$E(G_1)$ and $E(G_1)\cap M$ is a perfect matching of $G_1$.
\end{proof}

Let $M_1=E(G_1)\cap M$, then $D_1=D(G_1,M_1)$ is a subdigraph of
$D$. Moreover, let $G_1$ be a subgraph of $G$ consisting of only a
fixed double $e$ edge of $G$, then $e\in M$ and $D_1=D(G_1,\{e\})
$ contains only one vertex of $D$.
\begin{theorem} \label{theorem:components}
Let $G$ be a bipartite graph with a perfect matching $M$, $G_1$ a
subgraph of $G$ such that $M_1=E(G_1)\cap M$ is a perfect matching
of $G_1$. Let $D=D(G,M)$ and $D_1=D(G_1,M_1)$. Then the followings
are equivalent.

(1) $G_1$ is an elementary component of $G$, or consists of a
fixed double edge only.

(2) $D_1$ is a strong component of $D$.
\end{theorem}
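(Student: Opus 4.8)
The plan is to push both conditions through the correspondence $D=D(G,M)$ and compare them on the level of $M$-alternating cycles. The first step is a short \emph{dictionary}: directed cycles of $D$ are in bijection with $M$-alternating cycles of $G$ with incidences preserved, so a vertex $v_e$, respectively an arc $a_g$, lies on a directed cycle of $D$ exactly when the matching edge $e$, respectively the non-matching edge $g$, lies on the corresponding $M$-alternating cycle. Since the symmetric difference of $M$ with any $M$-alternating cycle $C$ is again a perfect matching, this yields: (i) $e\in M$ is a fixed double edge iff $v_e$ lies on no directed cycle of $D$; (ii) $g\notin M$ is non-fixed iff $a_g$ lies on a directed cycle of $D$; (iii) every edge of an $M$-alternating cycle of $G$ is non-fixed, so each such cycle lies inside a single elementary component of $G$. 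I would also record two auxiliary facts: a weakly connected digraph in which every arc lies on a directed cycle is strong (otherwise its condensation is acyclic but has an arc, and that arc lies on no directed cycle of the original digraph), and a strong component of $D$ is the subdigraph of $D$ induced on its vertex set (any two vertices of a strong component are joined, both ways, by paths staying inside it). Note finally that $D_1=D(G_1,M_1)$ is a genuine subdigraph of $D$: the hypothesis that $M_1$ is a perfect matching of $G_1$ forces, for each non-matching edge of $G_1$, the two $M$-edges at its endpoints into $M_1$, hence the endpoints of the corresponding arc into $V(D_1)$.

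For $(1)\Rightarrow(2)$: if $G_1$ is a single fixed double edge $e$, then $e\in M$, $D_1=\{v_e\}$ with no arcs, and $v_e$ lies on no directed cycle by (i); since every strong subdigraph on at least two vertices contains a directed cycle through each of its vertices, no strong subdigraph properly contains $\{v_e\}$, so $D_1$ is a strong component. If $G_1$ is an elementary component, then every edge of $G_1$ is non-fixed, and by (i)--(iii) the $M$-alternating cycle through any such edge stays inside $G_1$; hence every vertex and every arc of $D_1$ lies on a directed cycle inside $D_1$, and since $D_1$ is weakly connected (because $G_1$ is) it is strong (equivalently, $G_1$ is $1$-extendable, so this is Theorem~\ref{theorem:lp1} applied to $(G_1,M_1)$). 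For maximality, let $D'\supseteq D_1$ be strong and let $G'$ be the subgraph of $G$ spanned by the $M$-edges indexed by $V(D')$ and the non-$M$-edges indexed by $A(D')$: then $G'$ is connected, contains $G_1$, and lies in the non-fixed subgraph $H$ (a strong digraph on at least two vertices has every vertex and arc on a directed cycle, whose edges are non-fixed by (iii)); a connected subgraph of $H$ cannot properly contain a component of $H$, so $G'=G_1$ and $D'=D_1$.

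For $(2)\Rightarrow(1)$: if $D_1$ is a single vertex $v$, then $G_1$ is the single $M$-edge $e_v$; maximality of the strong component forces $v$ onto no directed cycle, so by (i) $e_v$ is a fixed double edge. If $D_1$ has at least two vertices, then by (i) and (ii) every edge of $G_1$ is non-fixed, so $G_1\subseteq H$; $G_1$ is connected because $D_1$ is; and no non-fixed edge incident with $V(G_1)$ lies outside $E(G_1)$ --- an incident $M$-edge coincides with the $M_1$-edge at the shared vertex, while for an incident non-$M$ edge $g$ the arc $a_g$ has an endpoint in $V(D_1)$ and, being non-fixed, lies on a directed cycle of $D$, which is then contained in the strong component $D_1$, forcing $a_g\in A(D_1)$ and $g\in E(G_1)$. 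Thus $G_1$ is a union of components of $H$, and being connected it is a single elementary component.

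I expect the main difficulty to be bookkeeping rather than a single hard idea: keeping the two degenerate cases (a fixed double edge versus a one-vertex strong component) in step, treating $D_1$ and the auxiliary graph $G'$ correctly as sub-objects of $D$ and $G$, and making sure an $M$-alternating cycle meeting an elementary component or a fixed double edge stays inside it. Once the dictionary (i)--(iii) and the two auxiliary facts are in hand, each of the two implications is a short translation.
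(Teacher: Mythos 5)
Your proposal is correct, and its core mechanism -- the bijection between directed cycles of $D$ and $M$-alternating cycles of $G$, used to identify fixed/non-fixed edges and to show that any strong subdigraph or alternating cycle meeting $G_1$ must be absorbed into it -- is the same as the paper's, which phrases the same facts via Theorem \ref{theorem:lp1} and the symmetric difference $M\vartriangle M'$. Your ``dictionary'' (i)--(iii) is just a more systematic packaging of those arguments, so no substantive difference.
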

\begin{proof}
Suppose that $G_1$ is an elementary component of $G$. Then $G_1$
is 1-extendable and hence $D_1$ is strong. Assume that $D_1$ is
properly contained in a strong subdigraph $D_1^\prime$ of $D$.
Then $G_1^\prime=B(D_1^\prime)$ is a 1-extendable subgraph of $G$
containing $G_1$. Furthermore, any perfect matching of
$G_1^\prime$ is contained in a perfect matching of $G$. Therefore
any edge of $G_1^\prime$ is contained in a perfect matching of
$G$. However any edge in $E(G_1^\prime)\backslash E(G_1)$ incident
to a vertex of $G_1$ must be a fixed single edge and can not be
contained in any perfect matching of $G$, which leads to a
contradiction. Hence $D_1$ is a maximal strong subdigraph, that
is, a strong component, of $D$.

Suppose that $G_1$ consists of a fixed double edge $e$ only. Then
$e\in M$ and $D(G_1,\{e\})$ contains exactly a vertex $v$ in $D$.
If $v$ is properly contained in a strong component $D_1^\prime$ of
$D$, then $G_1^\prime=B(D_1^\prime)$ is a 1-extendable subgraph of
$G$ containing $e$. Furthermore, every perfect matching of
$G_1^\prime$ is contained in a perfect matching of $G$. Hence
every edge of $G_1^\prime$ is contained in a perfect matching of
$G$. However $e$ is contained in every perfect matching of $G$, so
all edges adjacent to $e$ are fixed single edges and cannot be
contained in a perfect matching of $G$, a contradiction. Hence $v$
composes a strong component of $D$ with only one vertex.

Conversely, let $D_1$ be a strong component of $D$. Then
$G_1=B(D_1)$ is 1-extendable. To prove that $G_1$ is an elementary
component or consist of a fixed double edge, we need only to prove
that an edge $e=u_1u_2\in E(G)\backslash E(G_1)$ associated with a
vertex $u_1\in V(G_1)$ is a fixed single edge. Suppose that $e$ is
not a fixed single edge and contained in a perfect matching
$M^\prime$ of $G$. Let $u_1w_1$, $u_2w_2\in M$, which correspond
to vertices $v_1$ and $v_2$ in $D$ respectively, then $v_1\in
V(D_1)$ and $v_2\notin V(D_1)$. $M\vartriangle M^\prime$ consists
of nonadjacent edges and alternating cycles. The edges $e$,
$u_1w_1$ and $u_2w_2$ must be contained in an alternating cycle
$C$. However $C$ corresponds to a directed cycle in $D$, which
contains $v_1$ and $v_2$. This contradicts the fact that $D_1$ is
a strong component of $D$.
\end{proof}


\end{document}